\let\OLDthebibliography\thebibliography
\renewcommand\thebibliography[1]{
  \OLDthebibliography{#1}
  \setlength{\parskip}{0pt}
  \setlength{\itemsep}{0pt plus 0.3ex}
}
\newtheorem{thm}{Theorem}[section]
\newtheorem{lemma}[thm]{Lemma}
\newtheorem{cor}[thm]{Corollary}
\theoremstyle{definition}
\theoremstyle{remark}
\numberwithin{equation}{section}
\newcommand{\mmod}[1]{{\,\,\mathrm{mod}\,\,#1}}
\newcommand*\wrapletters[1]{\wr@pletters#1\@nil}
\def\wr@pletters#1#2\@nil{#1\allowbreak\if&#2&\else\wr@pletters#2\@nil\fi}
\def\alp{{\alpha}} 
\def\bet{{\beta}}  
\def\gam{{\gamma}} 
\def\del{{\delta}}
\def\tet{{\theta}}  
 \def\Lam{{\Lambda}}
\def\sig{{\sigma}}
\def\eps{\varepsilon}
\def\le{\leqslant} \def\ge{\geqslant}
\def\d{{\,{\rm d}}}
\def \sig{{\sigma}}
\def \bC {\mathbb C}
\def \bN {\mathbb N}
\def \bQ {\mathbb Q}
\def \bR {\mathbb R}
\def \bZ {\mathbb Z}
\def \bT {\mathbb T}
\def \bc {\mathbf c}
\def \bd {\mathbf d}
\def \bn {\mathbf n}
\def \bx {\mathbf x}
\def \by {\mathbf y}
\def \bz {\mathbf z}
\def \bzero {\mathbf 0}
\def \fm {\mathfrak m}
\def \fn {\mathfrak n}
\def \fM {\mathfrak M}
\def \cA {\mathcal A}
\def \cM {\mathcal M}
\def \cN {\mathcal N}
\def \cP {\mathcal P}
\def \cR {\mathcal R}
\def \ord {\mathrm{ord}}
\def \meas {\mathrm{meas}}
\def \diam {\diamond}
\def \Li {{\mathrm{Li}}}
\def \supp {{\mathrm{supp}}}
\begin{document}
\title[Roth--Waring--Goldbach]{Roth--Waring--Goldbach}
\author[Sam Chow]{Sam Chow}
\address{School of Mathematics, University of Bristol, University Walk, Clifton, Bristol BS8 1TW, United Kingdom}
\email{Sam.Chow@bristol.ac.uk}
\subjclass[2010]{11B30, 11D72, 11P32, 11P55, 37A45}
\keywords{Arithmetic combinatorics, diophantine equations, primes, circle method, restriction theory}
\thanks{}
\date{}
\begin{abstract} We use Green's transference principle to show that any subset of the $d$th powers of primes with positive relative density contains nontrivial solutions to a translation-invariant linear equation in $d^2+1$ or more variables, with explicit quantitative bounds.
\end{abstract}
\maketitle

\section{Introduction}
\label{intro}

Waring's problem \cite{War1770} dates back to 1770, and asks how large $s$ has to be in terms of $d$ to ensure that if $n$ is a large positive integer then 
\begin{equation} \label{WaringEq}
x_1^d + \ldots + x_s^d = n
\end{equation}
has a solution $\bx \in \bN^s$. The Hardy--Littlewood circle method has been a particularly effective approach to such problems, with the best results due to Wooley --- see \cite{VW2002, Woo2015}, as well as \cite[\S 2]{Bou2016} and \cite{Woo2016}. The circle method has also been used to solve the ternary Goldbach problem, and other problems concerning the addition of primes \cite{Hel2014, Vin1937}. Since Hua \cite{Hua1939}, many authors have enjoyed working on the Waring--Goldbach problem, which considers prime solutions to \eqref{WaringEq} --- see \cite{KW2001, KW2015,  LZ2015, Tha1987, Tha1989}, for instance. The circle method has again been the weapon of choice, with the main technical issue being the study of exponential sums over primes \cite{Hua1965}.

Roth's theorem \cite{Rot1953} states that if $A \subset [N]$ contains no nontrivial three-term arithmetic progressions then $|A| \ll \frac N {\log \log N}$. This bound has since been improved, most recently by Bloom \cite{Blo2015}. Such results are interesting because they identify patterns in the set $A$ without assuming anything about its structure. Three-term arithmetic progressions pertain to the diophantine equation
\[
x - 2y + z = 0,
\]
and much of the arithmetic combinatorics literature surrounds linear equations. Smith \cite{Smi2009}, Keil \cite{Kei2014} and Henriot \cite{Hen2015, Hen2016} have considered higher degree systems with the property that the solution set is invariant under translations and dilations. This property allows the use of a density increment strategy, which is the standard approach to Roth's theorem.

In 2005, Green \cite{Gre2005} famously solved a problem of Roth--Goldbach type for three primes. He devised a means of transferring Roth-type results from the integers to the primes. Using this mechanism, he showed that any subset of the primes with positive \emph{relative density} contains nontrivial three-term arithmetic progressions. Recently Browning and Prendiville \cite{BP2016} have shown Green's transference method to be versatile, establishing a theorem of Roth--Waring type for five squares. They were able to transfer results from the integers to the squares, thereby obtaining a Roth-type bound for a quadratic equation without the property of translation-dilation invariance. The present article combines aspects of Roth's theorem, Waring's problem and Goldbach problems.

Let $c_1, \ldots, c_s$ be nonzero integers such that
\begin{equation} \label{zerosum}
c_1 + \ldots + c_s = 0.
\end{equation}
Let $K$ be a union of $k$ proper subspaces of the rational hyperplane
\begin{equation} \label{LinearEquation}
c_1 x_1 + \ldots + c_s x_s = 0,
\end{equation}
each of which contains the diagonal 
\begin{equation} \label{diagonal}
\{(x,\ldots,x): x \in \bQ\}.
\end{equation}
Let $d \ge 2$ be an integer, and let $A$ be a set of primes in $[X] := \{1,2,\ldots,X\}$ such that the only solutions $\bx \in A^s$ to
\begin{equation} \label{ActualEquation}
c_1 x_1^d + \ldots + c_s x_s^d = 0
\end{equation}
have $(x_1^d, \ldots, x_s^d) \in K$. 

\begin{thm} \label{thm1} Assume $s \ge C(d)$, where $C(2) = 5$, $C(3) = 9$, $C(4) = 15$ and
\[
C(d)=  d^2 + 1 \qquad (d \ge 5).
\]
Then
\begin{equation} \label{DensityBound}
|A| \ll_{\bc, k, \eps} \frac X{\log X} (\log \log \log \log X)^{\frac{2-s}d+\eps}.
\end{equation}
\end{thm}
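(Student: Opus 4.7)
The plan is to prove the contrapositive via Green's transference principle applied to the image set $B = \{p^d : p \in A\} \subseteq [N]$ with $N = X^d$. Assume for contradiction that $|A|$ substantially exceeds the bound in \eqref{DensityBound}; the task is then to count weighted solutions of \eqref{ActualEquation} with $x_i \in A$ and exhibit one with $(x_1^d, \ldots, x_s^d) \notin K$.

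I would first run the standard $W$-trick: set $W = \prod_{p \le w} p$ with $w$ slowly growing (calibrated to the downstream Roth input), and by pigeon-hole pass to a reduced residue $b \pmod W$ on which $A$ retains density $\gg \delta / \phi(W)$. This removes local obstructions modulo small primes. Next, build a pseudo-random enveloping majorant $\nu$ for the renormalised indicator of $d$th powers of primes in this residue class, in the spirit of Ramar\'e--Ruzsa / Green--Tao, suitably adapted to $d$th powers.

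The analytic heart of the argument is a restriction estimate of the shape
\[
\int_0^1 \Bigl|\sum_{p \in A'} e(\alp p^d)\Bigr|^s \d\alp \ll_\eps X^{s-d-1+\eps}
\]
for $s \ge C(d)$, where $A'$ is the sieved residue class. For $d \ge 5$ this follows from the resolved Vinogradov mean-value theorem (Wooley's efficient congruencing, or the Bourgain--Demeter--Guth decoupling), combined with a sieve transition from integers to primes; the constant $d^2+1$ is precisely the critical exponent of the main conjecture. For $d \in \{2,3,4\}$ one substitutes the sharper ad hoc moment estimates of Hua, Heath-Brown, and Vaughan--Wooley to obtain $C(d) \in \{5,9,15\}$.

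The remainder is pure transference. Decompose the normalised indicator $f$ of $A$ (relative to $\nu$) as $f = f_1 + f_2$, with $f_1 \in [0, O(1)]$ structured on a Bohr set and $f_2$ of small Fourier $\ell^s$-norm. The $f_2$ contribution to the weighted solution count is absorbed by H\"older and the restriction bound; the $f_1$ contribution is estimated by a Sanders-type quantitative Roth theorem for $s$-variable translation-invariant equations on the dense Bohr model. Tuning $w$ so that the Bohr-set loss matches the Sanders bound produces the quadruple logarithm in \eqref{DensityBound}. The contribution of solutions lying in $K$ is of lower order: each of the $k$ component subspaces is proper and contains the diagonal \eqref{diagonal}, hence has codimension $\ge 1$ inside \eqref{LinearEquation}, so a direct Fourier/combinatorial count saves a factor of size $X^{-1}$ relative to the main term and can be absorbed into the $\ll_{\bc,k,\eps}$ constant. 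The principal obstacle is the restriction estimate at the exact threshold $C(d)$ \emph{for primes} rather than for all integers: for $d \ge 5$ this demands both the Vinogradov main conjecture and a careful sieve, and for $d \le 4$ the non-conjectural ad hoc moments listed above. A secondary subtlety is calibrating the $W$-trick scale $w$ so that the Bohr-set manipulations align with the Sanders-type bound, which is what ultimately pins down the $(\log\log\log\log X)^{(2-s)/d+\eps}$ factor in \eqref{DensityBound}.
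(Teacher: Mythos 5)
Your proposal matches the paper's high-level architecture (the $W$-trick, a weighted majorant on the $d$th powers of primes, a restriction estimate, dense-model transference, a Bloom/Sanders-type Roth input, and a separate count of $K$-trivial solutions), but it misses the crucial density-transfer step. Because the majorant must carry weights of order $p^{d-1}\log p$ when $Wn - b = p^d$ (this is forced by the normalisation $\|\nu\|_1 \sim N$), a set $A \subset \cP_X$ of relative density $\del$ can, after lifting to $\cA \subset [N]$, have $\nu$-weighted density as small as $\del^d$: the extremal case is $A$ consisting of the first $|A| = \del X/\log X$ primes, for which $\sum_{p \in A} p^{d-1}\log p \asymp (\del X)^d$. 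Your assertion that $A$ ``retains density $\gg \del/\varphi(W)$'' on the chosen residue class misses this $d$th-power degradation entirely, and that degradation is exactly what produces the $1/d$ in the exponent $(2-s)/d$ of \eqref{DensityBound}; without it you would be claiming a false, stronger bound with exponent $2-s$. The paper's Lemma~\ref{DT} establishes $\sum_{n\in\cA}\nu(n) \gg \del^d N$, and the final combination with the transference output yields $\del^d \ll (\log\log\log\log X)^{2+\eps-s}$.

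A second genuine gap is in the restriction estimate. The transference machine needs an estimate uniform over arbitrary test functions, namely $\sup_{|\phi|\le\nu}\int_\bT|\hat\phi(\alp)|^u\d\alp \ll N^{u-1}$ at some exponent $u$ strictly between $2t$ and $s$, not the single moment bound $\int_0^1\bigl|\sum_{p\in A'}e(\alp p^d)\bigr|^s\d\alp\ll X^{s-d-1+\eps}$ that you state; moreover, that bound carries an $X^\eps$ loss which must be removed. Upgrading from an $\eps$-lossy critical-exponent moment to a lossless, uniform-in-$\phi$ restriction inequality at supercritical exponents requires Bourgain's large-spectrum/level-set argument, which the paper runs twice (first for the unweighted integer-power majorant $\mu$, exploiting thicker major arcs and a Weyl-type power saving, then for $\nu$ using \eqref{record}); your sketch does not engage with this at all. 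As a smaller point, the claimed $X^{-1}$ saving on $K$-trivial solutions ``because each subspace has codimension $\ge 1$'' is not obviously correct once one eliminates a variable --- the residual constraint on the $d$th powers may degenerate to involve few variables --- and the paper instead proves a saving of $X^{-d/(s-1)+\eps}$ via H\"older (Lemma~\ref{trivialstep}), translating to the $N^{-1/s}$ saving that is actually used.
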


Loosely, this says that any subset of relative density $(\log \log \log \log)^{(2-s)/d+\eps}$ within the primes contains nontrivial solutions to \eqref{ActualEquation}. One could choose 
\[
K = \bigcup_{i \ne j} \{ \bx \in \bQ^s: x_i = x_j, \: \bc \cdot \bx = 0 \},
\]
for instance, which was the choice of Keil \cite{Kei2014} and Henriot \cite{Hen2015} in their work on diagonal quadrics in dense variables. Thus, any positive density subset of the primes contains a solution to \eqref{ActualEquation} with pairwise distinct coordinates. The reason for having a notion of trivial solutions is that, by \eqref{zerosum}, the diagonal \eqref{diagonal} lies within the solution set of \eqref{ActualEquation}.

We shall use Green's transference technology \cite{Gre2005} to transfer Roth-type results from the integers to the set of $d$th powers of primes. The protagonist shall be a measure $\nu$ on some interval $[N]$, where $N$ can be thought of as $X^d$. Morally $\nu(n)$ should be $dp^{d-1} \log p$, if $n = p^d$ for some prime $p \le X$, and zero otherwise. The measure $\nu$ has become known as a majorant; a \emph{majorant} on $[N]$ is a function $\nu: \bZ \to [0,\infty)$  with support in $[N]$. Our majorant shall have the additional normalisation property that 
\begin{equation} \label{normalisation}
\| \nu\|_1 \sim N.
\end{equation}
We refer the curious reader to the expository article \cite{Pre2015} for more on the history and terminology of the transference principle.

The point is that our set $A$ can be lifted to $[N]$ and weighted by $\nu$ to behave like a dense subset --- not of $\cP_X := \{ \text{prime } p \le X\}$, but of $[N]$. Bloom's theorem \cite{Blo2012} then ensures that the $\nu$-weighted solution count is large in terms of the density --- see \cite[\S 2]{BP2016} and \cite[\S 1.2]{Pre2015}. Since $A$ has only $K$-trivial solutions to \eqref{ActualEquation}, in the sense that the only solutions $\bx \in A^s$ to \eqref{ActualEquation} have $(x_1^d, \ldots,x_s^d) \in K$, we also obtain an upper bound for this count. Combining the two inequalities reaps a density bound of the shape \eqref{DensityBound}.

Browning and Prendiville \cite{BP2016} have distilled the method into the following ingredients.
\begin{enumerate}
\item Density transfer. We shall lift our set $A \subset \cP_X$ to a set $\cA \subset [N]$ in the support of our majorant $\nu$. With
\begin{equation} \label{delta}
\del := |A| \frac{\log X}X,
\end{equation}
we will show that $\cA$ has a $\nu$-weighted density of at least $\del^d$ in $[N]$. In other words, we shall establish the bound
\begin{equation} \label{DensityTransfer}
\sum_{n \in \cA} \nu(n) \gg \del^d N.
\end{equation}
\item Fourier decay. The majorant $\nu$ has \emph{Fourier decay of level $\tet$} if
\[
\| \hat{\nu} - \widehat{1_{[N]}} \|_\infty \le \tet N.
\]
We shall demonstrate a quantitatively $o(1)$ level of Fourier decay.
\item Restriction estimate. The majorant $\nu$ satisfies a \emph{restriction estimate at exponent $u$} if
\[
\sup_{|\phi| \le \nu} \int_\bT |\hat{\phi}(\alp)|^u \d \alp \ll_u \| \nu \|_1^u N^{-1}.
\]
\item $K$-trivial saving. For $\eta > 0$, the majorant $\nu$ \emph{saves $\eta$ on $K$-trivial solutions} if
\[
\sum_{\bx \in K} \prod_{i=1}^s \nu(x_i) \ll_{k,s,d,\eta} \| \nu \|_1^s N^{-1-\eta}.
\]
\end{enumerate}
Of these, the most technically demanding are Fourier decay and the restriction estimate, especially the latter. For both, it is necessary to have good pointwise estimates for certain exponential sums over primes. These exponential sums are given by the Fourier transform of our majorant. To ensure the necessary Fourier decay, we shall use the $W$-trick \cite{Gre2005}, which circumvents technical difficulties arising from the fact that the prime $d$th powers are not equidistributed in congruence classes to small moduli.

The number of variables required in Theorem \ref{thm1} is determined by the restriction estimate. Since we have good control on the growth of the weights involved, we shall see that the restriction estimate can be derived from a moment estimate for a simpler exponential sum. This leads to the following strengthening of Theorem \ref{thm1}. 

\begin{thm} \label{MainThm} Let $t \ge d$ be an integer such that the number of solutions $\bz \in [X]^{2t}$ to
\begin{equation} \label{hyp}
z_1^d + \ldots + z_t^d = z_{t+1}^d + \ldots + z_{2t}^d
\end{equation}
is $O_{t,d,\eps}(X^{2t-d+\eps})$, and assume $s > 2t$. Then we have \eqref{DensityBound}.
\end{thm}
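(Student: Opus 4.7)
The plan is to run the Browning--Prendiville transference scheme with a majorant $\nu$ adapted to the $d$th powers of primes, noting that the hypothesis \eqref{hyp} enters only through the restriction estimate. Once the four ingredients above are in place, Bloom's theorem \cite{Blo2012}, applied as in \cite[\S 2]{BP2016}, produces a lower bound on the $\nu$-weighted count of solutions to \eqref{ActualEquation} inside $\cA$; combined with the $K$-trivial upper bound this forces a nontrivial solution once $\del$ exceeds the threshold in \eqref{DensityBound}. It therefore suffices to verify all four ingredients.

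I would construct $\nu$ by the $W$-trick of \cite{Gre2005}: with $w=w(X)$ growing slowly, $W=\prod_{p\le w}p$ and a fixed reduced residue $b\pmod{W}$, let $\nu$ be supported on integers of the form $(Wm+b)^d$ with $Wm+b\le X$ prime, normalised so that $\|\nu\|_1\sim N:=(X/W)^d$. The density transfer \eqref{DensityTransfer} is then a direct consequence of Siegel--Walfisz; the $K$-trivial saving is elementary, since $K$ is a finite union of proper subspaces containing the diagonal, each imposing at least two independent linear conditions on $\bx$; and Fourier decay follows from the standard combination of Weyl--Vinogradov on the minor arcs with Siegel--Walfisz on the major arcs, exactly as in \cite{BP2016}. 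None of these ingredients depends on $s$.

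The restriction estimate is the heart of the matter and the only place \eqref{hyp} is used. I would prove it at any exponent $u$ with $2t<u<s$ by dyadic decomposition: write $\nu=\sum_{j\ge0}\nu_j$ with $\nu_j$ supported on $\{p^d:p\in[P_j,2P_j]\text{ prime}\}$, $P_j=X/2^j$, so that $\nu_j\ll P_j^{d-1}(\log X)^{O(1)}$ pointwise. For $|\phi|\le\nu$ with dyadic pieces $\phi_j$, Minkowski's inequality yields $\|\hat\phi\|_{L^{2t}(\bT)}\le\sum_j\|\hat\phi_j\|_{L^{2t}(\bT)}$, and by orthogonality
\[
\int_\bT|\hat\phi_j(\alpha)|^{2t}\d\alpha\le(P_j^{d-1}\log X)^{2t}\cdot N_{2t}(P_j),
\]
where $N_{2t}(P)$ is the number of prime tuples in $[P,2P]^{2t}$ solving \eqref{hyp}. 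Bounding $N_{2t}(P_j)$ by the integer count $N_{2t}(X)\ll X^{2t-d+\eps}$ supplied by \eqref{hyp} and summing the resulting geometric series over $P_j\le X$ yields $\int_\bT|\hat\phi|^{2t}\d\alpha\ll X^{2td-d+\eps}$. Since $\|\nu\|_1^{2t}N^{-1}\asymp X^{2td-d}$, this is a restriction estimate at exponent $2t$ with an $X^\eps$ loss, which Hölder interpolation against $\|\hat\phi\|_\infty\le\|\nu\|_1$ upgrades to an honest restriction estimate at any $u=2t+\eps_0$; since $s>2t$ we may take $u<s$.

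The main obstacle is not the restriction estimate itself, which reduces to a short dyadic argument once \eqref{hyp} is granted, but the Fourier decay, whose proof demands careful $W$-trick bookkeeping and sharp pointwise Weyl-type estimates for $\sum_{p\le X}(\log p)e(\alpha p^d)$. However, both the Fourier decay ingredient and the transference from a restriction estimate to \eqref{DensityBound} are independent of $s$ and can be imported essentially verbatim from \cite{BP2016} and \cite{Gre2005}.
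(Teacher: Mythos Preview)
Your overall architecture is right, and three of the four ingredients are essentially as in the paper. The gap is in the restriction estimate, and specifically in your claimed $\eps$-removal step. From $\int_\bT |\hat\phi|^{2t}\,\d\alp \ll X^{2td-d+\eps}$ and $\|\hat\phi\|_\infty \le \|\nu\|_1 \ll N$, H\"older interpolation gives only
\[
\int_\bT |\hat\phi|^{u}\,\d\alp \le \|\hat\phi\|_\infty^{u-2t}\int_\bT |\hat\phi|^{2t}\,\d\alp \ll N^{u-1+\eps},
\]
which still carries the $X^\eps$ loss; it does not yield an honest restriction estimate at $u=2t+\eps_0$. The paper flags exactly this point: the $X^\eps$ in the hypothesis \eqref{hyp} ``is a formidable hurdle, as Lemma \ref{minorbound} fails to provide a power saving on minor arcs'', so one cannot beat the $X^\eps$ by trading against a sup-norm bound alone.

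What the paper actually does is a two-stage Bourgain $\eps$-removal. First it introduces an auxiliary majorant $\mu$ (the unweighted $d$th-power indicator, no $\log p$), for which Weyl-type inequalities give a genuine \emph{power} saving on minor arcs; Bourgain's large-spectrum argument then upgrades the $\eps$-sharp $2t$-th moment for $\mu$ to $\int_\bT|\hat\psi|^v\,\d\alp \ll N^{v-1}L^v$ for any $v>2t$ (Lemma \ref{platform}). Second, since $\nu \le L\mu$ pointwise, one transfers this to $\phi$ with $|\phi|\le\nu$, obtaining a moment bound with only a polylogarithmic loss; a second pass of Bourgain's argument, now using the merely logarithmic minor-arc bound for $\hat\nu$ from Lemma \ref{minorbound} together with the major-arc estimate \eqref{record}, removes that log loss and yields the clean restriction estimate at any $u>2t$. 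Your dyadic decomposition is essentially the content of Lemma \ref{intermediate}, but that is only the starting point; the substantive work is the $\eps$-removal, and it cannot be done by H\"older against $\|\hat\phi\|_\infty$.

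So your final paragraph has the difficulty assessment inverted: the Fourier decay, while requiring careful $W$-trick bookkeeping, is standard once one has Hua-type bounds for exponential sums over primes; the restriction estimate is where the real analytic input (Bourgain's method, applied twice) lives. Minor side remarks: your majorant is set up slightly differently from the paper's (the paper supports $\nu$ on $[N]$ with $N\approx X^d/W$ via $n\mapsto (p^d+b)/W$, not on the $d$th powers themselves, and $N\ne (X/W)^d$), and the $K$-trivial saving is not quite ``two independent linear conditions on $\bx$'' but rather on $(x_1^d,\ldots,x_s^d)$, which the paper handles via H\"older and the hypothesis on \eqref{hyp}; but these are fixable details compared to the restriction gap.
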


To deduce Theorem \ref{thm1}, we apply Theorem \ref{MainThm} with the particular choice
\begin{equation} \label{choice}
t = \begin{cases}
\lfloor d^2/2 \rfloor, &\text{if } d \ne 4 \\
7, &\text{if } d=4.
\end{cases}
\end{equation}
For this to be valid, we need to check that if $t$ is given by \eqref{choice} then the number of solutions $\bz \in [X]^{2t}$ to \eqref{hyp} is indeed $O_{t,d,\eps}(X^{2t-d+\eps})$. For $(d,t) = (2,2)$, it is known that \eqref{hyp} has $O(X^2 \log X)$ solutions $\bz \in [X]^4$. For $(d,t) = (3,4)$ it is known that there are $O(X^5)$ solutions --- this follows, for instance, from the methods of \cite{Vau1986}.

For $d \ge 5$, one can show that if $t$ is given by \eqref{choice} then \eqref{hyp} has $O(X^{2t-d})$ solutions $\bz \in [X]^{2t}$. This is a consequence of the main conjecture in Vinogradov's mean value theorem, which was recently established by Bourgain, Demeter and Guth \cite{BDG2016}. Indeed, the equation \eqref{hyp} is a direct analogue of the equation
\begin{equation} \label{DefiniteVersion}
z_1^d + \ldots + z_{2t}^d = n \qquad (n \text{ large and fixed})
\end{equation}
addressed in \cite[Theorem 4.1]{Woo2012}. That theorem, which was previously conditional on \cite{BDG2016}, tells us that \eqref{DefiniteVersion} has $O(X^{2t-d})$ solutions $\bz \in [X]^{2t}$, since
\[
2t \ge d^2 - 1 \ge d^2 + 1 - \Biggl \lfloor \frac{\log d}{\log 2} \Biggr \rfloor.
\]
One can follow the proof of \cite[Theorem 4.1]{Woo2012} verbatim, to show that \eqref{hyp} has $O(X^{2t-d})$ solutions $\bz \in [X]^{2t}$. The point is that when $s \ge \frac12 d(d+1)$, the displayed equation in that proof tells us that we save $X^{d+1-\eps}$ on the ($2s$)th moment on minor arcs, which is far more than the $X^{d+\eps}$ that we need to save; the interpolation procedure in \cite[\S 3]{Woo2012} then secures a saving of $X^{d+\eps}$ on the ($2t$)th moment.

For $(d,t) = (4,7)$, we again follow the proof of \cite[Theorem 4.1]{Woo2012}, interpolating on minor arcs between an eighth and a twentieth moment. Hua's lemma \cite[Lemma 2.5]{Vau1997} yields the eight moment bound $O(X^{5+\eps})$. On minor arcs, the equation displayed in the proof of \cite[Theorem 4.1]{Woo2012} gives us the twentieth moment bound $O(X^{15+\eps})$. The treatment of the major arcs is standard \cite[\S 4.4]{Vau1997}, and we conclude that the number $\cN$ of solutions $\bz \in [X]^{14}$ to \eqref{hyp} satisfies
\[
\cN \ll (X^{5+\eps})^{1/2} (X^{15+\eps})^{1/2} = X^{10+\eps}.
\]

Theorem \ref{MainThm} also enables a septenary result for cubes, assuming the so-called Hooley Riemann hypothesis (HRH); see \cite[\S 6]{BW2014}. The statement below follows easily from Theorem \ref{MainThm} and \cite[Lemma 6.2]{BW2014}.

\begin{cor} \label{Hooley} Assume HRH, $d = 3$ and $s \ge 7$. Then we have \eqref{DensityBound}.
\end{cor}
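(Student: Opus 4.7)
The plan is to deduce Corollary \ref{Hooley} directly from Theorem \ref{MainThm} by choosing the smallest legal value of $t$ for $d=3$ and then verifying the mean-value hypothesis \eqref{hyp} using Hooley's conditional bound, as codified in \cite[Lemma 6.2]{BW2014}.

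Concretely, I take $d = 3$ and $t = 3$. The constraint $t \ge d$ from Theorem \ref{MainThm} is then satisfied with equality, and the hypothesis $s > 2t$ reduces to $s \ge 7$, which is exactly what we are given. With these parameters, the mean value appearing in \eqref{hyp} is the sixth moment of the Weyl cubic exponential sum: we require that the number of solutions $\bz \in [X]^{6}$ to
\[
z_1^3 + z_2^3 + z_3^3 = z_4^3 + z_5^3 + z_6^3
\]
be $O_\eps(X^{3+\eps})$. This is precisely a square-root-cancellation bound for the sixth moment, which is unknown unconditionally but follows from HRH; indeed, the content of \cite[Lemma 6.2]{BW2014} is that, on HRH, this count is $O(X^{3+\eps})$. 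Cite that lemma to certify the hypothesis of Theorem \ref{MainThm}.

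With the mean-value input secured, an invocation of Theorem \ref{MainThm} with $(d,t)=(3,3)$ yields the density bound \eqref{DensityBound} for every $s \ge 7$, finishing the proof. There is no real obstacle here: all of the analytic work (density transfer, Fourier decay, restriction estimate, $K$-trivial saving) has been absorbed into Theorem \ref{MainThm}, and the only thing that needs checking for this particular corollary is the moment estimate, which is delivered by \cite[Lemma 6.2]{BW2014}. Accordingly the proof can be written in a few lines.
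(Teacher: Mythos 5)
Your proposal is correct and matches the paper's argument exactly: the paper also deduces Corollary \ref{Hooley} by applying Theorem \ref{MainThm} with $(d,t)=(3,3)$ (so $s>2t$ becomes $s\ge 7$), using \cite[Lemma 6.2]{BW2014} to supply, under HRH, the required bound $O(X^{3+\eps})$ on the number of solutions to \eqref{hyp}.
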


We now comment on the relevance of restriction theory \cite{Bou1989, Bou1993, Hen2016, Tao2004}. The number of variables required to implement the circle method is often governed by the exponent at which we know a sharp moment estimate for an exponential sum. When the variables are restricted to lie in a set $A$, the relevant exponential sums necessarily come with weights supported on $A$. The key ingredient for such problems, therefore, is a moment estimate for an exponential sum with fairly arbitrary weights. Restriction theory concerns inequalities between norms of Fourier transforms, which is the same as bounding moments of weighted exponential sums.

Finally, we feel it is appropriate to describe the difficulties involved in proving the restriction estimate, and to outline our strategy for doing so. Using our hypothesis on $t$, step one is to deduce an `almost-sharp' restriction estimate at exponent $2t$: this fails to be sharp by a factor of $X^\eps$ (see Lemma \ref{intermediate}). If one could obtain a power saving on traditional minor arcs, for Weyl sums over primes, then a standard epsilon-removal process would complete the proof (see \cite[\S 4]{Bou1989}); the problem is that current technology only allows us to save a logarithmic factor here (see Lemma \ref{minorbound}). Our approach is to introduce a related majorant $\mu$, not involving primes. Step two is to establish a restriction inequality for $\mu$ at some intermediate exponent, sharp up to a logarithmic factor (see Lemma \ref{platform}). Step three is to bootstrap this to the full-strength restriction estimate for our prime power majorant $\nu$.

Note that for the specific value \eqref{choice} of $t$, we know that the equation \eqref{hyp} has $O(X^{2t-d} \log X)$ solutions, at least when $d \ne 4$: recall the discussion following \eqref{choice}. Thus, when $d \ne 4$, the restriction estimate relevant to Theorem \ref{thm1} is substantially easier to prove --- one deals with the logarithmic factor directly using the methods in \cite[\S 4]{Bou1989}. Our more general approach reduces the number of variables required when $d=4$, enables Corollary \ref{Hooley}, and also anticipates future improvements in our understanding of the diophantine equation \eqref{hyp}.

We organise thus. In \S \ref{W}, we shall construct our majorant $\nu$, confirm \eqref{normalisation}, define our lifted set $\cA$, and establish the density transfer inequality \eqref{DensityTransfer}. In \S \ref{exponential}, we use the circle method to study the Fourier transform $\hat{\nu}$. The analysis therein will allow us to establish Fourier decay in \S \ref{decay}. In \S \ref{Restriction}, we use Bourgain's methods \cite{Bou1989} to prove that $\nu$ satisfies the relevant restriction estimate. We check in \S \ref{trivial} that $\nu$ saves $1/s$ on $K$-trivial solutions, before putting it all together to prove Theorem \ref{MainThm} in \S \ref{density}.

We adopt the convention that $\eps$ denotes an arbitrarily small positive real number, so its value may differ between instances. The symbol $p$ shall be reserved for primes. For $x \in \bR$ and $q \in \bN$, put \mbox{$e(x) = e^{2 \pi i x}$} and $e_q(x) = e^{2 \pi i x / q}$. Boldface will be used for vectors, for instance we abbreviate $(x_1,\ldots,x_n)$ to $\bx$, and define $|\bx| = \max(|x_1|, \ldots, |x_n|)$. For $x \in \bR$, let $\| x \|$ be the distance from $x$ to the nearest integer. Let $\cP$ denote the set of primes. For $Y \in \bN$, let $[Y] = \{ 1,2, \ldots, Y \}$ and $\cP_Y = \cP \cap [Y]$. We shall make use of the offset logarithmic integral $\Li(x) = \int_2^x \frac {\d t}{\log t}$.

We write $\bT$ for the torus $\bR / \bZ$. We shall use Landau and Vinogradov notation: for functions $f$ and positive-valued functions $g$, write $f \ll g$ or $f = O(g)$ if there exists a constant $C$ such that $|f(x)| \le C g(x)$ for all $x$. If $S$ is a set, we denote the cardinality of $S$ by $|S|$ or $\# S$. The pronumeral $X$ denotes a large positive integer, and we shall put $L = \log X$ throughout. We write $C_1, C_2, \ldots$ for positive constants that appear in the course of our proofs.

For $r \ge 1$ and $f: \bZ \to \bC$, we define the $L^r$-norm by
\[
\| f \|_r = \Bigl( \sum_n |f(n)|^r \Bigr)^{1/r}.
\]
When $\| f\|_1 < \infty$, we also define the Fourier transform of $f$ by
\begin{align*} \hat{f}: \bT &\to \bC, \\
\hat{f}(\alp) &= \sum_n f(n) e(\alp n).
\end{align*}

The author would like to thank his advisor Trevor Wooley very much for his guidance. Thanks also to Tim Browning and Sean Prendiville for fruitful conversations. We thank the anonymous referee for a detailed review containing several helpful suggestions.

\section{The $W$-trick}
\label{W}

We begin by defining our majorant $\nu$. As discussed, we shall apply the $W$-trick \cite{Gre2005} from the outset, so that we will later obtain sufficient Fourier decay. Let
\begin{equation} \label{Wdef}
w = \frac12 \log \log X, \qquad W= 4d^3 \prod_{p \le w} p.
\end{equation}
Note the factor of $d^3$ included in the definition \eqref{Wdef} of $W$; this special feature will come into play during the case analysis in \S \ref{decay}. Since $X$ is large, it follows from the prime number theorem that
\begin{equation} \label{Wbound}
W \le e^{2w} = \log X = L.
\end{equation}
For $b \in [W]$ with
\begin{equation} \label{bcond}
-b \in (\bZ/W\bZ)^{\times d} := \{ z^d: z \in (\bZ / W \bZ)^\times \},
\end{equation}
let
\begin{equation} \label{sigdef}
\sig(b) = \# \{z \in [W]: z^d \equiv -b \mmod \it{W}\}.
\end{equation}

We begin with the observation that $\sig(b)$ does not, in fact, depend on $b$. To verify this it suffices, by the Chinese remainder theorem, to show that if $r \in \bN$ and $p^r \| W$ then 
\[
\# \{z \mmod p^r: z^d \equiv -b \mmod p^r \}
\]
is the same for each $b$ satisfying \eqref{bcond}. If $p \ne 2$ then this follows easily using a primitive root (see \cite[Ch. 10]{Apo1976}). For $p =2$, we can instead use the fact that any odd residue class is representable uniquely  as $(-1)^u 5^v$, with $u \mmod 2$ and $v \mmod 2^{r-2}$ (see \cite[Ch. 4]{Dav2000}). We conclude that $\sig(b)$ is the same for each $b \in -(\bZ/W\bZ)^{\times d}$.

To ensure density transfer, we shall choose $b$ to maximise the $\nu_b$-measure of $\cA_b$, where
\begin{equation} \label{nudef}
\nu_b(n) =
\begin{cases}
\frac {\varphi(W)}{W \sig(b)} dp^{d-1} \log p, & \text{if } Wn-b = p^d \text{ with } p \in \cP_X \\
0, &\text{ otherwise}
\end{cases}
\end{equation}
and
\begin{equation} \label{cAdef}
\cA_b = \{ n \in \bZ: Wn-b = p^d \text{ for some } p \in A \}.
\end{equation}
Let 
\begin{equation} \label{Ndef}
N = \lfloor X^d/W \rfloor + 1.
\end{equation}

\begin{lemma} [Density transfer] \label{DT} Assume $\del > (\log X)^{-1}$. Then there exists $b \in [W]$ such that $-b \in (\bZ/W\bZ)^{\times d}$ and
\[
\sum_{n \in \cA_b} \nu_b(n) \gg \del^d N.
\]
\end{lemma}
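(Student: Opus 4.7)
My plan is a pigeonhole argument over admissible $b$. Unwinding \eqref{nudef} and \eqref{cAdef},
\[
S(b) := \sum_{n\in\cA_b}\nu_b(n) = \frac{\varphi(W)}{W\sig(b)}\sum_{\substack{p\in A\\ p^d\equiv -b\mmod{W}}} d p^{d-1}\log p.
\]
For each prime $p\in A$ coprime to $W$, the residue $-p^d \bmod W$ lies in $(\bZ/W\bZ)^{\times d}$ and so selects exactly one admissible $b\in[W]$. Using the already-established constancy of $\sig(b)=\sig$ on admissible residues, together with the fact that the number of admissible $b$ equals $|(\bZ/W\bZ)^{\times d}|=\varphi(W)/\sig$, summing $S(b)$ over admissible $b$ and averaging produces some admissible $b$ with
\[
S(b) \;\ge\; \frac{\sig}{\varphi(W)}\cdot\frac{\varphi(W)}{W\sig}\sum_{\substack{p\in A\\ (p,W)=1}} d p^{d-1}\log p \;=\; \frac{d}{W}\sum_{\substack{p\in A\\ (p,W)=1}} p^{d-1}\log p.
\]

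It then remains to show this sum is $\gg_d \del^d X^d$, after which the lemma follows via $N \asymp X^d/W$ (see \eqref{Ndef}). I would set $Y=\del X/4$. The hypothesis $\del>(\log X)^{-1}$ forces $Y\to\infty$ (in fact $Y\ge X/(4\log X)$), which dwarfs $w=(\log\log X)/2$, and also gives $\log Y = (1+o(1))\log X$. By the prime number theorem $\pi(Y) \le |A|/2$ for $X$ large, so $|A\cap(Y,X]| \ge |A|/2 = \del X/(2\log X)$. Every such prime is automatically coprime to $\prod_{q\le w}q$, and discarding the $O_d(1)$ primes dividing $4d^3$ retains $\gg \del X/\log X$ primes $p\in A\cap(Y,X]$ with $(p,W)=1$. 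On this range $p^{d-1}\log p \ge Y^{d-1}\log Y \gg (\del X)^{d-1}\log X$, and combining with the count of surviving primes yields the desired lower bound.

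The one delicate piece of bookkeeping is the cancellation between the multiplicity $\varphi(W)/\sig$ of admissible residues and the normalising constant $\varphi(W)/(W\sig)$ in \eqref{nudef}; this is exactly what produces the factor $1/W$ which, combined with $N \asymp X^d/W$, matches the target density $\del^d N$. Beyond this, the argument is just a pigeonhole plus a mild PNT input, the hypothesis $\del>(\log X)^{-1}$ being invoked only so that $Y\to\infty$ and $\pi(Y)$ admits an asymptotic estimate rather than merely a trivial one; I do not anticipate any real obstacle.
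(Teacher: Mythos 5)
Your argument is correct and matches the paper's own proof in all essentials: the same averaging/pigeonhole over admissible residues $b$ using the constancy of $\sig(b)$ and the identity $|-(\bZ/W\bZ)^{\times d}|=\varphi(W)/\sig$, followed by a prime number theorem bound showing $\sum_{p\in A}p^{d-1}\log p\gg\del^d X^d$. The only cosmetic difference is in that last step, where the paper replaces $A$ by the first $|A|$ primes (so sums over $p\le(1-\eps)\del X$), while you instead keep the primes of $A$ lying in $(Y,X]$ with $Y=\del X/4$ and bound each term from below; these are two phrasings of the same PNT input.
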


\begin{proof} We shall implicitly embed $-(\bZ/W\bZ)^{\times d}$ into $[W]$, in the obvious way. We use a standard averaging argument, noting first that
\[
\sum_{b \in -(\bZ/W\bZ)^{\times d} } \sum_{n \in \cA_b} \nu_b(n)
= \sum_{p \in A} \frac {\varphi(W)}{W \sig(b)} dp^{d-1} \log p 
- \sum_{p \in A, \: p \le w} \frac {\varphi(W)}{W \sig(b)} dp^{d-1} \log p.
\]
By over-counting, and by recalling that $\sig(b)$ is the same for each $b \in -(\bZ/W\bZ)^{\times d}$, we deduce that
\[
|-(\bZ/W\bZ)^{\times d}|= \frac{\varphi(W)}{\sig(b)}.
\]
Thus, if $b$ is chosen to maximise $\sum\limits_{n \in \cA_b} \nu_b(n)$, then
\[
\sum_{n \in \cA_b} \nu_b(n)
\ge -1 + \sum_{p \in A} W^{-1} dp^{d-1} \log p.
\]

A crude lower bound for $\sum\limits_{p \in A} p^{d-1} \log p$ is given by the sum of $p^{d-1} \log p$ over the first $|A|$ primes $p$. By \eqref{delta} and the prime number theorem, we now have
\[
\sum_{p \in A} p^{d-1} \log p \ge \sum_{p \le (1-\eps) \del X} p^{d-1} \log p \gg \del^d X^d.
\]
Hence
\[
\sum_{n \in \cA_b} \nu_b(n) \gg \del^d X^d / W \gg \del^d N.
\]
\end{proof}

The assumption that $\del > (\log X)^{-1}$ is harmless in the context of Theorem \ref{MainThm}, for if $\del \le (\log X)^{-1}$ then we certainly have \eqref{DensityBound}. We henceforth fix $b$ as in Lemma \ref{DT}, and write
\begin{equation} \label{fixb}
\cA = \cA_b, \qquad \nu = \nu_b,
\end{equation}
so that we have \eqref{DensityTransfer}. Note that our majorant $\nu$ is supported on $[N]$, and that
\begin{equation} \label{supremum}
\| \nu \|_\infty \ll X^{d-1} L.
\end{equation}
Next, we verify \eqref{normalisation}. The proof is standard, but we nonetheless present it, as it will prepare us well for the next section.

We compute:
\begin{align} \notag
\| \nu \|_1 &= \sum_{\substack{p \le X: \\ p^d \equiv -b \mmod \it{W}}} \frac {\varphi(W)}{W \sig(b)} dp^{d-1} \log p \\
\label{L1calc} &= \sig(b)^{-1} \sum_{\substack{z \in [W]: \\ z^d \equiv -b \mmod \it{W}}} \frac{\varphi(W)}W 
\sum_{\substack{p \le X: \\ p \equiv z \mmod \it{W}}} dp^{d-1} \log p.
\end{align}
The inner sum is treated using Abel summation. For $n \in [X]$, put
\[
A_n = \sum_{\substack{p \le n: \\ p \equiv z \mmod \it{W}}} 1.
\]
Note that since $(b,W) = 1$ we must also have $(z,W) = 1$. The bound \eqref{Wbound} allows us to apply Siegel--Walfisz \cite[Lemma 7.14]{Hua1965}, so
\[
A_n = \frac{\Li(n)}{\varphi(W)} + O(X e^{-C_1 \sqrt{L}}).
\]

With $g(n) = dn^{d-1} \log n$, we have
\begin{align*}
\sum_{\substack{p \le X: \\ p \equiv z \mmod \it{W}}} dp^{d-1} \log p 
&= \sum_{n =2}^X (A_n - A_{n-1}) g(n) 
\\ &= A_X g(X+1) + \sum_{n=2}^X A_n (g(n) - g(n+1)).
\end{align*}
The mean value theorem tells us that $g(n) - g(n+1) \ll X^{d-2}L$. In light of \eqref{Wbound} and \eqref{Ndef}, we now have
\begin{align*}
\varphi(W) \sum_{\substack{p \le X: \\ p \equiv z \mmod \it{W}}} dp^{d-1} \log p 
&= \Li(X)g(X+1) + \sum_{n=2}^X \Li(n) \cdot (g(n) - g(n+1)) \\
&\qquad + O(Ne^{-C_2 \sqrt{L}}).
\end{align*}
As $\Li(2) = 0$, we therefore have
\begin{align*}
\varphi(W) \sum_{\substack{p \le X: \\ p \equiv z \mmod \it{W}}} dp^{d-1} \log p 
&= \sum_{n=3}^X g(n) \int_{n-1}^n \frac{\d x}{\log x}+ O(Ne^{-C_2 \sqrt{L}}) \\
&= \sum_{n=3}^X dn^{d-1} \int_{n-1}^n \frac{ \log n}{\log x} \d x+ O(Ne^{-C_2 \sqrt{L}}).
\end{align*}

When $2 \le n-1 < x < n$, the mean value theorem tells us that 
\[
\log n = \log x + O(1/n).
\]
Hence
\[
\varphi(W) \sum_{\substack{p \le X: \\ p \equiv z \mmod \it{W}}} dp^{d-1} \log p 
= \sum_{n \le X} dn^{d-1} + O(Ne^{-C_2 \sqrt{L}}) = X^d + O(Ne^{-C_2 \sqrt{L}}).
\]
Substituting this into \eqref{L1calc}, and recalling \eqref{sigdef} and \eqref{Ndef}, yields
\[
\| \nu \|_1 = X^d/W + O(Ne^{-C_2 \sqrt{L}}) = N + O(Ne^{-C_2 \sqrt{L}}),
\]
confirming \eqref{normalisation}.

\section{Exponential sums}
\label{exponential}

We wish to investigate
\begin{align}
\notag \hat{\nu}(\alp) &= \frac{\varphi(W)}{W\sig(b)} \sum_{\substack{p \le X:\\ p^d \equiv -b \mmod \it{W}}} (dp^{d-1} \log p) e(\alp (p^d+b)/W) \\
\label{hatcalc} &= \frac{\varphi(W) e(\alp b/W)}{W\sig(b)}\sum_{\substack{z \in [W]: \\ z^d \equiv -b \mmod \it{W}}}
\:\: \sum_{\substack{p \le X: \\ p \equiv z \mmod \it{W}}} (dp^{d-1} \log p) e(\alp p^d/W),
\end{align}
so we focus on the inner sum. We begin with a Hardy--Littlewood dissection, dissecting $\bT$ into major arcs $\fM$ and minor arcs $\fm$. Let $\sig_0$ be a large positive constant, and let $\sig$ be a much larger positive constant. For $q \in \bN$ and $a \in \bZ$, let $\fM(q,a)$ be the set of $\alp \in \bT$ such that $|\alp - a/q| \le L^\sig X^{-d}$. Let $\fM(q)$ be the union of the sets $\fM(q,a)$ over integers $a$ such that $(a,q) = 1$, and let $\fM$ be the union of the sets $\fM(q)$ over $q \le L^\sig$. Put $\fm = \bT \setminus \fM$. By identifying $\bT$ with a unit interval, we may write\
\[
\fM(q) = \bigcup_{\substack{{a=0}\\(a,q)=1}}^{q-1} \fM(q,a).
\]

\begin{lemma} \label{minorbound} If $\alp \in \fm$ then $\hat{\nu}(\alp) \ll NL^{-\sig_0}$.
\end{lemma}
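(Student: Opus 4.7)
The plan is to reduce the minor arc bound for $\hat\nu(\alp)$ to a classical Weyl--Vinogradov-type estimate for exponential sums over primes. Starting from \eqref{hatcalc}, partial summation with weight $g(t)=dt^{d-1}$ strips the polynomial factor, so it suffices to prove, uniformly in $Y\le X$ and in each admissible residue $z$, that
\[
\Psi(Y;z) := \sum_{\substack{p\le Y\\ p\equiv z \pmod{W}}} (\log p)\, e(\alp p^d/W)\ \ll\ Y\, L^{-\sig_0-C_d}
\]
for a constant $C_d$ depending only on $d$. Combining this with the prefactor $\varphi(W)/(W\sig(b))$ in \eqref{hatcalc}, summing over the at most $d$ residues $z$, and accounting for the $X^{d-1}$ boost from partial summation, one recovers $\hat\nu(\alp)\ll (\varphi(W)X^d/W) L^{-\sig_0}\ll NL^{-\sig_0}$.

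The crucial preparatory step is to translate the minor arc condition on $\alp$ into a Diophantine condition on $\bet := \alp/W$. By Dirichlet's theorem applied at scale $X^d L^{-\sig/2}$, there are coprime $a,q$ with $1\le q\le X^d L^{-\sig/2}$ and $|\bet - a/q|\le L^{\sig/2}/(qX^d)$. We claim $q>L^{\sig/2}$: if not, reducing $Wa/q$ to lowest terms $a^*/q^*$ gives $q^*\mid q$, hence $q^*\le L^\sig$, together with
\[
\left|\alp - \frac{a^*}{q^*}\right| = W\left|\bet-\frac{a}{q}\right| \le \frac{WL^{\sig/2}}{X^d} \le \frac{L^\sig}{X^d}
\]
by \eqref{Wbound} and the choice of $\sig$ large relative to $\sig_0$. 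This would place $\alp\in\fM$, contradicting $\alp\in\fm$. Therefore $L^{\sig/2}<q\le X^d L^{-\sig/2}$ and in particular $|\bet - a/q|\le 1/q^2$.

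With this Diophantine information in hand, we bound $\Psi(Y;z)$ using orthogonality of Dirichlet characters modulo $W$. Since $(z,W)=1$, we can write
\[
\Psi(Y;z) = \frac{1}{\varphi(W)}\sum_{\chi \pmod{W}}\overline{\chi}(z)\sum_{n\le Y}\chi(n)\Lambda(n)\, e(\bet n^d) + O(Y^{1/2}\log Y).
\]
Each twisted von Mangoldt exponential sum is then attacked by Vaughan's identity (decomposing into Type I and Type II bilinear sums) followed by Weyl differencing, along the lines of Hua \cite[Ch.\ 8]{Hua1965} or \cite{Vau1986}. This produces a saving of the shape $Y^{1+\eps}(q^{-1}+qX^{-d})^{\rho(d)}$ with $\rho(d)>0$ depending only on $d$. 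Inserting the bounds $L^{\sig/2}<q\le X^d L^{-\sig/2}$ from the previous step yields $\Psi(Y;z)\ll YL^{-\rho(d)\sig/2+\eps}$, and choosing $\sig$ sufficiently large relative to $\sig_0$ and $d$ delivers the required bound.

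The main obstacle is the parameter bookkeeping: one must verify that the dilation by $W$ when passing between $\alp$ and $\bet$ does not destroy the Diophantine approximation, and the hierarchy $\sig_0\ll\sig$ must be arranged so that the Weyl--Vinogradov saving $\rho(d)\sig/2$ comfortably beats $\sig_0+C_d$ after the polynomial losses from partial summation and summing over residues are absorbed. The slack $W\le L$ from \eqref{Wbound}, together with the freedom to enlarge $\sig$ at will, makes this routine once the translation step above is correctly set up.
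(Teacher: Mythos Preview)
Your approach is correct in outline, but it diverges from the paper's in two key places and is noticeably more laborious.

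First, the paper applies Dirichlet's theorem directly to $\alp$ (at scale $X^dL^{-\sig}$), not to $\alp/W$. Writing $\bet=\alp-a/q$ with $|\bet|\le X^{-d}$, the paper factors the phase as $e(\alp p^d/W)=e(\bet p^d/W)\cdot e_{Wq}(ap^d)$. Because $|\bet|\le X^{-d}$, the function $f(t)=e(\bet t^d/W)\,dt^{d-1}\log t$ has $f'(t)\ll X^{d-2}L$, so a single partial summation strips the \emph{entire} slowly-varying weight $f$ at once, leaving only the pure rational-phase sum
\[
A_\diamond(t)=\sum_{\substack{p\le t\\ p\equiv z\,(\text{mod }W)}} e_{Wq}(ap^d).
\]
This reduction lets the paper invoke Hua's Theorem~10 off the shelf: after checking that the effective denominator $q^*=Wq/(a,W)$ satisfies $L^\sig<q^*\le t^dL^{-\sig/2}$, Hua's bound gives $A_\diamond(t)\ll XL^{-\sig_0-1}W^{-1}$ immediately. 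Your route---strip only $dp^{d-1}$, then expand the progression via Dirichlet characters and run Vaughan's identity plus Weyl differencing on each twisted sum---is essentially re-proving the content of Hua's Theorem~10, with the extra bookkeeping of the $W$-dilation in the Diophantine step. Both methods close, but the paper's packaging of $e(\bet p^d/W)$ into the partial-summation weight is the cleaner move.

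Two small slips in your write-up: the claim ``uniformly in $Y\le X$'' cannot literally hold (for tiny $Y$ the saving fails); what you need, as the paper makes explicit, is the bound for $Y\in[XL^{-2\sig_0},X]$, with the trivial bound $|\Psi(Y;z)|\le Y$ handling the remaining range. Relatedly, your Weyl--Vinogradov saving should read $(q^{-1}+Y^{-1/2}+qY^{-d})^{\rho(d)}$ rather than $qX^{-d}$, though with $Y\ge XL^{-2\sig_0}$ and $\sig$ large relative to $\sig_0$ this distinction is harmless.
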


\begin{proof} Let $\alp \in \fm$. By Dirichlet's approximation theorem \cite[Lemma 2.1]{Vau1997}, we obtain relatively prime integers $q$ and $a$ such that $1 \le q \le X^d L^{-\sig}$ and $|q \alp - a| \le L^\sig X^{-d}$. Now $|\alp-a/q| \le L^\sig X^{-d}$ so, as $\alp \notin \fM$, we must have $q > L^\sig$. Thus, with $\bet = \alp - a/q$, we have
\[
|\bet| \le \frac{L^\sig}{qX^d} \le X^{-d}.
\]
Let $z \in [W]$ with $z^d \equiv -b \mmod \it{W}$. By partial summation, we have
\begin{align*}
\sum_{\substack{p \le X: \\ p \equiv z \mmod \it{W}}} (dp^{d-1} \log p) e(\alp p^d/W)
&= \sum_{\substack{p \le X: \\ p \equiv z \mmod \it{W}}} e(\bet p^d/W) (dp^{d-1} \log p) e_{Wq}(a p^d) \\
&= A_\diam(X) f(X) - \int_1^X A_\diam(t) f'(t) \d t,
\end{align*}
where $f(t) = e(\bet t^d/W) dt^{d-1} \log t$ and
\[
A_\diam(t) = \sum_{\substack{p \le t: \\ p \equiv z \mmod \it{W}}} e_{Wq}(ap^d).
\]

Note that $f'(t) \ll X^{d-2}L$, and that $|A_\diam(t)| \le t$. Thus,
\begin{align*}
\sum_{\substack{p \le X: \\ p \equiv z \mmod \it{W}}} (dp^{d-1} \log p) e(\alp p^d/W)
&= A_\diam(X) f(X) - \int_{XL^{-2\sig_0}}^X A_\diam(t) f'(t) \d t 
\\ &\quad+ O(NL^{-\sig_0}).
\end{align*}
In view of \eqref{sigdef} and \eqref{hatcalc}, we now have
\[
\hat{\nu}(\alp) \ll NL^{-\sig_0} + X^{d-1}L \sup_{XL^{-2\sig_0} < t \le X} |A_\diam(t)|.
\]
It remains to estimate $A_\diam(t)$ when $XL^{-2\sig_0} < t \le X$. We shall use \cite[Theorem 10]{Hua1965} for this. In order to apply this result, we need to control size of the denominator
\[
q^* := \frac{Wq}{(a,Wq)} = \frac{Wq}{(a,W)},
\]
in terms of $t$. Recalling \eqref{Wbound}, we have
\[
L^\sig < q \le q^* \le Wq \le X^d L^{1-\sig} \le t^d L^{-\sig/2}.
\]
As $L \ll \log t \ll L$, we may thus invoke \cite[Theorem 10]{Hua1965}, which tells us that
\[
A_\diam(t) \ll XL^{-\sig_0-1}W^{-1}.
\]
Hence $\hat{\nu}(\alp) \ll NL^{-\sig_0}$.
\end{proof}

On major arcs we can decompose our Fourier transform into archimedean and non-archimedean components. When $(z,W) = 1$, let 
\begin{equation} \label{Sdef}
 S^*_q(a,z) = \sum_{\substack{r \mmod \it{q}: \\ (z+Wr,Wq)=1}} e_q\Bigl(a \frac{(z+Wr)^d+b}W \Bigr)
\end{equation}
and
\begin{equation} \label{Idef}
I(\bet) = \int_0^N e(\bet t) \d t.
\end{equation}

\begin{lemma} 
Let $\alp \in \fM(q,a)$ with $(a,q) = 1$ and $q \le L^\sig$, and put
\begin{equation} \label{beta}
\bet = \alp - a/q \in [-L^\sig X^{-d}, L^\sig X^{-d}].
\end{equation}
Then
\begin{equation} \label{majorarcs} 
\hat{\nu}(\alp) = I(\bet) \sig(b)^{-1} \sum_{\substack{z \in [W]: \\ z^d \equiv -b \mmod \it{W}}}  \frac{\varphi(W)}{\varphi(Wq)}  S^*_q(a,z) + O(Ne^{-C_4 \sqrt{L}}).
\end{equation}
\end{lemma}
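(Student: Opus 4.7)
The plan is to perform a Hardy--Littlewood decomposition of the inner sum in \eqref{hatcalc} by splitting the primes $p \equiv z \pmod W$ into residue classes modulo $Wq$, and then evaluating the contribution of each class via Siegel--Walfisz and partial summation.

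\medskip

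\textbf{Step 1: Factoring out the non-archimedean phase.} Writing $\alp = a/q + \bet$, I would express
\[
e(\alp(p^d+b)/W) = e_q\bigl(a(p^d+b)/W\bigr)\, e\bigl(\bet(p^d+b)/W\bigr).
\]
For fixed $z$ with $z^d \equiv -b\pmod W$, partition the residues $p \equiv z \pmod W$ into classes $p \equiv z + Wr \pmod{Wq}$ with $r \in \{0,1,\ldots,q-1\}$. On each such class, $(p^d+b)/W \equiv ((z+Wr)^d+b)/W \pmod{q}$, so $e_q(a(p^d+b)/W) = e_q(a((z+Wr)^d+b)/W)$ is a constant factor. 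Residues with $(z+Wr,Wq)>1$ contribute at most one prime each (and $O(L^\sigma X^{d-1}L)$ in total, which is dominated by our error), so they can be discarded, producing exactly the restriction $(z+Wr,Wq)=1$ appearing in \eqref{Sdef}.

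\medskip

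\textbf{Step 2: Siegel--Walfisz on each class.} For a fixed class $p \equiv z+Wr \pmod{Wq}$ with $(z+Wr,Wq)=1$, I would set $B_\diamond(t) = \#\{p \le t: p \equiv z+Wr \pmod{Wq}\}$. The bound \eqref{Wbound} gives $Wq \le L^{1+\sigma}$, which is well within the Siegel--Walfisz range, yielding
\[
B_\diamond(t) = \frac{\Li(t)}{\varphi(Wq)} + O\bigl(X e^{-C_3 \sqrt L}\bigr).
\]
Partial summation against $g(t) = dt^{d-1}\log t\cdot e(\bet t^d/W)$ — whose derivative is $O(X^{d-2}L)$ on the portion $t>X L^{-2\sig_0}$ since $|\bet|\le L^\sig X^{-d}$ — then converts the inner prime sum into
\[
\frac{1}{\varphi(Wq)} \int_2^X \frac{dt^{d-1}\log t}{\log t}\, e(\bet t^d/W)\,\d t + O\bigl(N e^{-C_4 \sqrt L}\bigr),
\]
exactly as in the $\|\nu\|_1$ computation performed in \S\ref{W}.

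\medskip

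\textbf{Step 3: Change of variable and reassembly.} The substitution $u = t^d/W$ turns the integral into $W\int_0^{N} e(\bet u)\,\d u = W I(\bet) + O(1)$, and the discrepancy between $e(\bet(p^d+b)/W)$ and $e(\bet p^d/W)$ is $O(L^\sig X^{-d})$, contributing an error far smaller than $Ne^{-C_4\sqrt L}$. Summing over the $q \le L^\sig$ residue classes mod $Wq$ assembles $S_q^*(a,z)$ as defined in \eqref{Sdef}, giving
\[
\sum_{\substack{p \le X\\p \equiv z\mmod{W}}} dp^{d-1}\log p\cdot e(\alp(p^d+b)/W) = \frac{W\, I(\bet)\, S_q^*(a,z)}{\varphi(Wq)} + O\bigl(L^\sig N e^{-C_4\sqrt L}\bigr).
\]
Inserting this into \eqref{hatcalc}, the prefactor $\varphi(W)/W$ combines with $W/\varphi(Wq)$ to yield $\varphi(W)/\varphi(Wq)$, summing over $z$ gives the identity \eqref{majorarcs} after absorbing the $L^\sig \sig(b)$ factor into the exponential error by slightly shrinking $C_4$.

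\medskip

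\textbf{Main obstacle.} The only delicate issue is verifying that Siegel--Walfisz applies uniformly: one needs $Wq$ to grow slower than any power of $e^{\sqrt{\log t}}$, which is ensured by \eqref{Wbound} and $q \le L^\sig$, and one must confirm that the error terms (from Siegel--Walfisz, from replacing $e(\bet b/W)$ by $1$, and from discarding small $t$) all collapse into a single $Ne^{-C_4\sqrt L}$ bound after being multiplied by at most $L^\sig \sig(b)$ factors.
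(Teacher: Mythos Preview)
Your proposal is correct and follows essentially the same route as the paper: split the primes $p\equiv z\pmod W$ into residue classes modulo $Wq$, apply Siegel--Walfisz (valid since $Wq\le L^{1+\sig}$), and use partial summation against $dt^{d-1}\log t\cdot e(\bet t^d/W)$ to extract $I(\bet)$. Two cosmetic points: the derivative bound in Step~2 should be $O(X^{d-2}L^{\sig+1})$ rather than $O(X^{d-2}L)$ (the $e(\bet t^d/W)$ factor contributes an extra $L^\sig$), and the truncation $t>XL^{-2\sig_0}$ is unnecessary here since the Siegel--Walfisz error is already uniform; neither affects the outcome.
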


\begin{proof} With \eqref{hatcalc} in mind, we initially fix $z \in [W]$ with $z^d \equiv -b \mmod \it{W}$, and study
\[
\sum_{\substack{p \le X: \\ p \equiv z \mmod \it{W}}} (dp^{d-1} \log p) e(\alp p^d/W).
\]
For $n \in [X]$, let
\[
S_n = \sum_{\substack{p \le n: \\ p \equiv z \mmod \it{W}}} e_{Wq}(ap^d).
\]
Then
\[
S_n = O(Wq) + \sum_{\substack{r \mmod q: \\ (z+Wr, Wq) = 1}} e_{Wq}(a(z+Wr)^d) \sum_{\substack{p \le n: \\ p \equiv z+Wr \mmod \it{Wq}}} 1.
\]
As $n \le X$ and $Wq \le L^{\sig+1}$, the inner sum is amenable to Siegel--Walfisz \cite[Lemma 7.14]{Hua1965}, and so
\[
S_n = \frac{\Li(n)}{\varphi(Wq)} V_q(a,z) + O(Xe^{-C_3 \sqrt{L}}),
\]
where
\begin{equation} \label{Vdef}
V_q(a,z) = \sum_{\substack{r \mmod q: \\ (z+Wr, Wq) = 1}} e_{Wq}(a(z+Wr)^d) .
\end{equation}

With $f(t) = e(\bet t^d/W) dt^{d-1} \log t$, we have
\begin{align*}
\sum_{\substack{p \le X: \\ p \equiv z \mmod \it{W}}} (dp^{d-1} \log p) e(\alp p^d/W)
&= \sum_{n=2}^X (S_n - S_{n-1}) f(n) \\
&=  S_X f(X+1) + \sum_{n=2}^X S_n (f(n) - f(n+1)).
\end{align*}
As $|\bet| \le L^\sig X^{-d}$, the mean value theorem implies that 
\[
f(n) - f(n+1) \ll X^{d-2} L^{\sig+1}.
\]
Hence
\begin{align*}
& \sum_{\substack{p \le X: \\ p \equiv z \mmod \it{W}}} (dp^{d-1} \log p) e(\alp p^d/W) \\
& = \frac{V_q(a,z)}{\varphi(Wq)} \Bigl[
\Li(X)f(X+1) + \sum_{n=2}^X \Li(n) \cdot (f(n) - f(n+1))
\Bigr]
+O(Ne^{-C_4 \sqrt{L}}).
\end{align*}
As $\Li(2) = 0$, we thus have
\begin{align} \notag
&\sum_{\substack{p \le X: \\ p \equiv z \mmod \it{W}}} (dp^{d-1} \log p) e(\alp p^d/W)\\
\label{innercalc} &= \frac{V_q(a,z)}{\varphi(Wq)} \sum_{n=3}^X \int_{n-1}^n \frac{f(n)} {\log x} \d x
+O(Ne^{-C_4 \sqrt{L}}).
\end{align}

When $n-1 < x < n$, the mean value theorem reveals that 
\[
f(n) = f(x) + O(X^{d-2}L^{\sig+1}),
\]
and so
\begin{align*}
\sum_{n=3}^X \int_{n-1}^n \frac{f(n)} {\log x} \d x &= \int_2^X dx^{d-1} e(\bet x^d/W) \d x + O(X^{d-1}L^{\sig+1}) \\
&= WI(\bet) +  O(X^{d-1} L^{\sig+1}).
\end{align*}
Substituting this into \eqref{innercalc}, and noting that $|V_q(a,z)| \le q \le L^\sig$, gives
\[
\sum_{\substack{p \le X: \\ p \equiv z \mmod \it{W}}} (dp^{d-1} \log p) e(\alp p^d/W) \\
= \frac{W}{\varphi(Wq)} V_q(a,z) I(\bet)+O(Ne^{-C_4 \sqrt{L}}).
\]
Substituting this into \eqref{hatcalc}, and recalling \eqref{sigdef}, gives
\[
\hat{\nu}(\alp) = \frac{\varphi(W) e(\alp b/W)}{\varphi(Wq) \sig(b)} \sum_{\substack{z \in [W]: \\ z^d \equiv -b \mmod \it{W}}}  V_q(a,z) I(\bet) +O(Ne^{-C_4 \sqrt{L}}).
\]

From \eqref{Sdef} and \eqref{Vdef}, we see that
\[
e_{Wq}(ab) V_q(a,z) = S_q^*(a,z).
\]
Since
\[
e(\bet b/W) I(\bet) = \int_0^N e(\bet (t+b/W)) \d t = I(\bet) + O(1),
\]
we finally have \eqref{majorarcs}.
\end{proof}

\section{Fourier decay}
\label{decay}

In this section, we will establish that $\nu$ has Fourier decay of level $O(w^{\eps - 1/2})$. In other words, we shall prove that if $\alp \in \bT$ then 
\begin{equation} \label{FourierDecay}
\hat{\nu}(\alp) - \widehat{1_{[N]}} (\alp) \ll_\eps w^{\eps-1/2}N.
\end{equation}
By a geometric series, we have
\begin{equation} \label{geom}
\widehat{1_{[N]}} (\alp) = \sum_{n \le N} e(\alp n) \ll \| \alp \|^{-1}.
\end{equation}

First suppose $\alp \in \fm$. By Dirichlet's approximation theorem, we obtain relatively prime integers $q$ and $a$ such that $1 \le q \le L^\sig$ and \mbox{$|q \alp - a| \le L^{-\sig}$.} As $\alp \notin \fM$, we must have $|\alp - a/q| > L^\sig X^{-d}$, so 
\[
\widehat{1_{[N]}} (\alp) \ll \| \alp \|^{-1} \ll \frac q { \| q \alp \|} \ll X^d L^{-\sig}.
\]
Recalling \eqref{Wbound} and \eqref{Ndef}, we now have $\widehat{1_{[N]}} (\alp) \ll N L^{1-\sig}$. Coupling this with Lemma \ref{minorbound}, using the triangle inequality, yields
\[
\hat\nu (\alp) - \widehat{1_{[N]}} (\alp) \ll N L^{1-\sig} + NL^{-\sig_0}.
\]
Upon recalling the definition \eqref{Wdef} of $w$, we conclude that \eqref{FourierDecay} holds for $\alp \in \fm$.

Next we consider the case in which $q=1$ and $\alp \in \fM(q)$. In other words,
\[
\|\alp\| \le L^\sig X^{-d}.
\]
From \eqref{sigdef} and \eqref{majorarcs}, we see that
\[
\hat \nu (\alp) = I(\alp) + O(Ne^{-C_4 \sqrt L}).
\]
By Euler--Maclaurin summation \cite[Eq. (4.8)]{Vau1997}, we have
\[
\widehat{1_{[N]}} (\alp) - I(\alp) \ll 1 + N \|\alp\| \ll 1 + NL^\sig X^{-d} \ll L^\sig.
\]
The triangle inequality now gives
\[
\hat \nu (\alp) - \widehat{1_{[N]}} (\alp) \ll Ne^{-C_4 \sqrt L}.
\]
Recalling \eqref{Wdef}, we conclude that \eqref{FourierDecay} holds whenever $\alp \in \fM(1)$.

Finally, let $\alp \in \fM(q,a)$ with $2 \le q \le L^\sig$ and $(a,q) = 1$, and put \eqref{beta}. Since $q \ge 2$, we must have $|a| \ge 1$. Substituting
\[
\| \alp \| \ge q^{-1} - |\bet| \ge q^{-1} - L^\sig X^{-d} \gg q^{-1}
\]
into \eqref{geom} gives
\begin{equation} \label{basic}
\widehat{1_{[N]}} (\alp) \ll q \ll L^\sig.
\end{equation}
By \eqref{sigdef}, \eqref{majorarcs} and the trivial estimate $|I(\bet)| \le N$, we have
\begin{equation} \label{majorcalc}
\hat \nu (\alp) \ll Ne^{-C_4 \sqrt L} + \frac{N}{\varphi(q)} \sup_z |S_q^*(a,z)|,
\end{equation}
where the supremum is over $z \in [W]$ such that $(z,W) = 1$.

We now study the sums $S_q^*(a,z)$. Let $z \in [W]$ with $(z,W) = 1$. By \eqref{Sdef}, we have
\begin{equation} \label{twist}
S_q^*(a,z) = e_{Wq}(a(z^d+b)) S^\diam_q(a,z),
\end{equation}
where 
\[
S^\diam_q(a,z) = \sum_{\substack{r \mmod \it{q}: \\ (z+Wr,Wq)=1}} e_q \Bigl(a
\sum_{\ell = 1}^d {d \choose \ell} W^{\ell - 1} z^{d- \ell} r^\ell
\Bigr).
\]
As $(z+Wr,W) = (z,W) = 1$, we have the slightly simpler expression
\[
S^\diam_q(a,z) = \sum_{\substack{r \mmod \it{q}: \\ (z+Wr,q)=1}} e_q \Bigl(a
\sum_{\ell = 1}^d {d \choose \ell} W^{\ell - 1} z^{d- \ell} r^\ell
\Bigr).
\]

Let $q = uv$, where $u$ is $w$-smooth and $(v,W) = 1$. Since $(u,v) = 1$, a standard calculation reveals that
\begin{equation} \label{decomp}
S^\diam_q(a,z) = S^\diam_u(a_1,z) S^\diam_v(a_2,z),
\end{equation}
where $a_1 = av^{-1} \in (\bZ / u\bZ)^\times$ and $a_2 = au^{-1} \in (\bZ / v\bZ)^\times$ (see \cite[Lemma 2.10]{Vau1997}). First consider
\[
S^\diam_u(a_1,z) = \sum_{\substack{r \mmod \it{u}: \\ (z+Wr,u)=1}} e_u \Bigl(a_1
\sum_{\ell = 1}^d {d \choose \ell} W^{\ell - 1} z^{d- \ell} r^\ell
\Bigr).
\]
As $u$ is $w$-smooth and $(z,W) = 1$, the condition $(z+Wr,u)=1$ is always met, and so
\begin{equation} \label{udiam}
S^\diam_u(a_1,z) = \sum_{r \mmod \it{u}} e_u \Bigl(a_1
\sum_{\ell = 1}^d {d \choose \ell} W^{\ell - 1} z^{d- \ell} r^\ell
\Bigr).
\end{equation}

We now borrow a strategy employed in \cite[\S 5]{BP2016}. Let $h = (u,W)$, and put $u = hu'$ and $W= hW'$, noting that $(u',W') = 1$. Writing $r = r_1 + u' r_2$, with $r_1 \mmod u'$ and $r_2 \mmod h$, yields
\begin{align*}
S^\diam_u(a_1,z) &=
\sum_{\substack{r_1 \mmod u' \\ r_2 \mmod h}}
e_{hu'} \Bigl(a_1
\sum_{\ell = 1}^d {d \choose \ell} (hW')^{\ell - 1} z^{d- \ell} (r_1+u'r_2)^\ell
\Bigr) \\
&= \sum_{r_1 =0}^{u'-1} e_{hu'} \Bigl(a_1
\sum_{\ell = 1}^d {d \choose \ell} (hW')^{\ell - 1} z^{d- \ell} r_1^\ell
\Bigr) \\
& \qquad \sum_{r_2 = 0}^{h-1} e_h  \Bigl(a_1
\sum_{\ell = 1}^d {d \choose \ell} (hW')^{\ell - 1} z^{d- \ell} (u')^{\ell-1} r_2^\ell
\Bigr).
\end{align*}
The inner sum is
\[
\sum_{r_2 \mmod h} e_h(da_1 z^{d-1} r_2),
\]
which vanishes unless $h \mid d a_1 z^{d-1}$. As $(h,a_1) = (h,z) = 1$, we conclude that
\begin{equation} \label{bigu}
S^\diam_u(a_1,z) =0, \quad \text{if } (u,W) \nmid d,
\end{equation}
while if $h \mid d$ then
\begin{equation} \label{critical}
S^\diam_u(a_1,z) = h \sum_{r_1 = 0}^{u'-1} e_u \Bigl(a_1
\sum_{\ell = 1}^d {d \choose \ell} W^{\ell - 1} z^{d- \ell} r_1^\ell
\Bigr).
\end{equation}

Next consider
\[
e_{Wv}(a_2z^d) S^\diam_v(a_2,z) = \sum_{\substack{r \mmod \it{v}: \\ (z+Wr,v)=1}} e_v\Bigl(a_2
\frac{(z+Wr)^d}W\Bigr).
\]
As $(v,W) = 1$, we can change variables by $t = zW^{-1} + r \in \bZ / v \bZ$, which gives
\[
e_{Wv}(a_2z^d) S^\diam_v(a_2,z) = \sum_{\substack{t \mmod \it{v}: \\ (t,v)=1}} e_v(a_2 W^{d-1} t^d).
\]
Since $(a_2 W^{d-1}, v) = 1$, we may apply \cite[Lemma 8.5]{Hua1965}, which tells us that 
\begin{equation} \label{vbound}
S^\diam_v(a_2,z)  \ll v^{1/2+\eps} \ll q^{1/2+\eps}.
\end{equation}

As $q \ge 2$, we must have (i) $u \nmid d$, (ii) $1 \ne q \mid d$, or (iii) $u \mid d$ and $q > w$. 
\textbf{Case: $u \nmid d$.} Suppose for a contradiction that $(u,W) \mid d$. Then for all primes $p$ we have
\[
\min(\ord_p(u), \ord_p(W)) \le \ord_p(d).
\]
Since $\ord_p(W) > \ord_p(d)$ whenever $p \le w$, and since $u$ is $w$-smooth, this tells us that $u \mid d$, contradicting this case. Hence $(u,W) \nmid d$, so by \eqref{bigu} we have $S^\diam_u(a_1,z) = 0$. Therefore $S^*_q(a,z)$ vanishes, by \eqref{twist} and \eqref{decomp}. Now \eqref{majorarcs} gives
\begin{equation} \label{simple}
\hat \nu (\alp) \ll Ne^{-C_4 \sqrt L}.
\end{equation}

\textbf{Case: $1 \ne q \mid d$.} In this case $v=1$ and $q = u$. Further,
\[
h = (u, W) = (q, W) = q,
\]
since $q \mid d \mid W$. So $u' = 1$, and from \eqref{critical} we see that $S^\diam_q(a,z) = q$. By \eqref{twist}, we therefore have
\[
\sum_{\substack{z \in [W]: \\ z^d \equiv -b \mmod \it{W}}} S^*_q(a,z) 
= q \sum_{\substack{z \in [W]: \\ z^d \equiv -b \mmod \it{W}}} e_{Wq}(a(z^d+b)).
\]
We will find that this sum vanishes, which is the point of the $W$-trick. Write
\[
z \equiv x + \frac W d y \mmod W
\]
with $x \mmod W/d$ and $y \mmod d$. Note that
\[
z^d \equiv \Bigl(x+\frac W d y \Bigr)^d \equiv x^d \mmod W,
\]
in view of the definition \eqref{Wdef} of $W$. Hence
\[
q^{-1}\sum_{\substack{z \in [W]: \\ z^d \equiv -b \mmod \it{W}}} S^*_q(a,z) 
= \sum_{\substack{x \in [W/d]:\\ x^d \equiv -b \mmod W}} e_{Wq}(a(x^d+b)) \sum_{y \mmod d} e_q(ax^{d-1}y).
\]
As $(b,W) = 1$ and $q \mid W$, we have $(x,q) = 1$. Since $q \ne 1$ and $(a,q) = 1$, and since $q \mid d$, we deduce that the inner sum vanishes. Therefore
\[
\sum_{\substack{z \in [W]: \\ z^d \equiv -b \mmod \it{W}}} S^*_q(a,z) = 0.
\]
Substituting this into \eqref{majorarcs} yields \eqref{simple}.

\textbf{Case: $u \mid d$ and $q > w$.} By \eqref{udiam}, we have
\[
|S_u^\diam(a_1, z)| \le u \le d \ll 1.
\]
Now \eqref{twist}, \eqref{decomp} and \eqref{vbound} give
\begin{equation} \label{sqrt}
S^*_q(a,z) \ll q^{1/2+\eps}.
\end{equation}
Substituting this into \eqref{majorcalc} yields
\[
\hat \nu (\alp) \ll Ne^{-C_4 \sqrt L} + q^{2\eps - 1/2}N \ll w^{2 \eps - 1/2}N.
\]

Recalling \eqref{Wdef}, we see that we have 
\[
\hat \nu (\alp) \ll w^{\eps - 1/2}N
\]
in all three cases. Coupling this with \eqref{basic} yields \eqref{FourierDecay}. We conclude that the majorant $\nu$ has Fourier decay of level $O(w^{\eps - 1/2})$.

Note that the inequality \eqref{sqrt} is valid in all three cases. We record the following estimate for later use.
\begin{lemma} Let $\alp \in \fM(q,a)$ with $1 \le q \le L^\sig$ and $(a,q) = 1$.
Then
\begin{equation} \label{record}
\hat \nu(\alp) \ll q^{\eps- 1/2} \min\{ N, | \alp - a/q | ^{-1} \} + Ne^{-C_4 \sqrt L}.
\end{equation}
\end{lemma}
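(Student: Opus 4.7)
The plan is to combine the major-arc expansion \eqref{majorarcs} with the Weyl-type bound $|S^*_q(a,z)| \ll q^{1/2+\eps}$, which was established via \eqref{sqrt} for $q \ge 2$ in the three cases analysed above and holds trivially for $q = 1$ (since $S^*_1(a,z) = 1$), together with the elementary estimate $|I(\bet)| \ll \min(N, |\bet|^{-1})$ for the Fourier integral of $1_{[0,N]}$.

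Setting $\bet = \alp - a/q$ as in \eqref{beta}, we have $|\bet| \le L^\sig X^{-d} < 1/2$, so $\|\alp - a/q\| = |\bet|$. Substituting the bound $|S^*_q(a,z)| \ll q^{1/2+\eps}$ into \eqref{majorarcs} and noting that the sum over $z$ contributes exactly $\sig(b)$ terms (cancelling the $\sig(b)^{-1}$ factor out front), we are left with the ratio $\varphi(W)/\varphi(Wq)$. A standard manipulation gives
\[
\frac{\varphi(W)}{\varphi(Wq)} \le \frac{1}{\varphi(q)} \ll q^{\eps - 1},
\]
the first inequality holding because every prime dividing $W$ also divides $Wq$, so $\varphi(Wq)/\varphi(W) = q \prod_{p \mid q, \, p \nmid W}(1-1/p) \ge q \cdot \varphi(q)/q = \varphi(q)$.

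Combining these estimates yields
\[
\hat\nu(\alp) \ll |I(\bet)| \, q^{\eps - 1/2} + N e^{-C_4 \sqrt L},
\]
and the classical bound $|I(\bet)| \ll \min(N, |\bet|^{-1}) = \min(N, \|\alp - a/q\|^{-1})$ completes the argument. I do not anticipate a genuine obstacle here: the lemma is essentially a clean repackaging of the Fourier-decay analysis of Section \ref{decay}, now retaining explicit dependence on the denominator $q$ and on the distance $\|\alp - a/q\|$ for subsequent use in the restriction estimate.
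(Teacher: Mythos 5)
Your proof is correct and follows essentially the same route as the paper: expand $\hat\nu$ via \eqref{majorarcs}, bound the Gauss-type sum by \eqref{sqrt}, bound $|I(\bet)|$ by $\min(N,\|\bet\|^{-1})$, and absorb $\varphi(W)/\varphi(Wq)$ into $\varphi(q)^{-1} \ll q^{\eps-1}$. You spell out the inequality $\varphi(W)/\varphi(Wq) \le \varphi(q)^{-1}$ and the trivial $q=1$ case of \eqref{sqrt} a bit more explicitly than the paper does, but the argument is the same.
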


\begin{proof} Put \eqref{beta}. By \eqref{sigdef} and \eqref{majorarcs}, we have
\[
\hat \nu (\alp) \ll \varphi(q)^{-1} \sup_z |I(\bet) S^*_q(a,z)| + Ne^{-C_4 \sqrt L}.
\]
The integral \eqref{Idef} admits the standard estimate
\[
I(\bet) \ll \min \{ N, \|\bet\|^{-1} \} = \min \{ N, | \alp-a/q |^{-1} \},
\]
so by \eqref{sqrt} we have
\begin{align*}
\hat \nu (\alp) &\ll \varphi(q)^{-1} q^{1/2+\eps} \min \{ N, | \alp-a/q |^{-1} \} + Ne^{-C_4 \sqrt L}  \\
&\ll q^{2\eps -1/2} \min \{ N, | \alp-a/q |^{-1} \} + Ne^{-C_4 \sqrt L}.
\end{align*}
\end{proof}

\section{The restriction estimate}
\label{Restriction}

Let $t \ge d$ be an integer such that the number of solutions $\bz \in [X]^{2t}$ to \eqref{hyp} is $O_{t,d,\eps}(X^{2t-d+\eps})$. In this section, we show that $\nu$ satisfies a restriction estimate at any exponent $u > 2t$. The following lemma suffices, by \eqref{normalisation}.

\begin{lemma} \label{RestrictionEstimate} Let $\phi: \bZ \to \bC$ with $|\phi| \le \nu$, and let $u > 2t$ be a real number. Then
\[
\int_\bT |\hat \phi (\alp)|^u \d \alp \ll_u N^{u-1}.
\]
\end{lemma}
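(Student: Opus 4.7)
The plan is to follow Bourgain's restriction method \cite{Bou1989}, in the vein of Browning--Prendiville \cite{BP2016}: first establish a sharp $2t$-th moment bound for $\hat{\nu}$, then transfer it to $\hat{\phi}$ via majorisation, and finally interpolate against the trivial $L^\infty$ bound.

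The first step is to show
\[
\int_\bT |\hat{\nu}(\alp)|^{2t} \, d\alp \ll_\eps N^{2t - 1 + \eps}.
\]
Since $2t$ is an even integer, this integral expands as a weighted count of $2t$-tuples of primes $(p_1, \ldots, p_{2t}) \in \cP_X^{2t}$, each satisfying $p_i^d \equiv -b \mmod W$, subject to
\[
p_1^d + \cdots + p_t^d = p_{t+1}^d + \cdots + p_{2t}^d,
\]
and weighted by $\prod_i \nu((p_i^d + b)/W)$. By \eqref{supremum} each weight is $O(X^{d-1} L)$, so the product of the $2t$ weights is $O((X^{d-1} L)^{2t})$. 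Dropping the primality constraint and invoking the hypothesis \eqref{hyp} bounds the number of tuples by $O_\eps(X^{2t-d+\eps})$. Combining these, and using $N \asymp X^d/W \gg X^d/L$ from \eqref{Wbound} and \eqref{Ndef}, yields the claimed moment bound.

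Next, since $|\phi| \le \nu$ and $\nu \ge 0$, the $t$-fold convolutions satisfy $|\phi^{*t}(n)| \le \nu^{*t}(n)$ pointwise in $n$. By Plancherel,
\[
\int_\bT |\hat{\phi}(\alp)|^{2t} \, d\alp = \|\phi^{*t}\|_2^2 \le \|\nu^{*t}\|_2^2 = \int_\bT |\hat{\nu}(\alp)|^{2t} \, d\alp \ll N^{2t-1+\eps}.
\]
For $u > 2t$, H\"older's inequality against the trivial estimate $\|\hat{\phi}\|_\infty \le \|\nu\|_1 \ll N$ gives
\[
\int_\bT |\hat{\phi}(\alp)|^u \, d\alp \le \|\hat{\phi}\|_\infty^{u - 2t} \int_\bT |\hat{\phi}(\alp)|^{2t} \, d\alp \ll N^{u - 1 + \eps}.
\]

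The hard part will be removing the $N^\eps$ loss to reach the sharp bound $\ll_u N^{u-1}$ stated in the lemma. For this I would invoke Bourgain's $\eps$-removal \cite{Bou1989}: for each $\lambda > 0$, the pointwise major-arc estimate \eqref{record} forces the level set $E_\lambda = \{\alp \in \bT : |\hat{\phi}(\alp)| > \lambda\}$ to concentrate near rationals with small denominator, whose collective measure is tightly controlled. A careful level-set decomposition --- balancing this structural constraint against the moment estimate and exploiting the strict inequality $u > 2t$ to absorb the residual $\eps$-slack into the implicit constant --- should then deliver the claimed sharp bound.
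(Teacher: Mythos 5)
Your first two steps are sound: the $\eps$-sharp $2t$-th moment bound for $\hat\nu$ via the solution-count hypothesis, and the transfer to $\hat\phi$ by the pointwise convolution bound and Plancherel, together with H\"older to climb from exponent $2t$ to $u$, correctly give $\int_\bT |\hat\phi|^u \ll N^{u-1+\eps}$. These steps parallel Lemma~\ref{intermediate} in spirit.

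The gap is in the $\eps$-removal, which you leave as a gesture, and the one-step route you sketch would in fact fail. In Bourgain's level-set argument one bounds $\meas(\cR_\del)$ in two regimes. The moment bound at $2t$ with $N^\eps$ loss handles small $\del$, but only down to a threshold of order $N^{-\eps'}$; above that threshold you must discard the minor-arc contribution $O(R^2 N^\gam \cdot (\sup_{\fm}|\hat\nu|/N)^\gam)$ and for it to be negligible against $\del^{2\gam}N^\gam R^2$ you need the minor-arc supremum of $\hat\nu/N$ to be $o(\del^2)$. But Lemma~\ref{minorbound} provides only a \emph{logarithmic} saving $|\hat\nu(\alp)| \ll NL^{-\sig_0}$ on $\fm$ (there is no power saving for prime exponential sums without RH-type inputs), whereas $\del^2 \asymp N^{-2\eps'}$ is polynomially small. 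So the direct argument from the $\hat\nu$ moment bound plus \eqref{record} cannot close. This is precisely the ``formidable hurdle'' the paper flags at the start of \S\ref{Restriction}.

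The paper's resolution is a two-stage $\eps$-removal through an auxiliary majorant $\mu$ supported on all $d$-th powers (not just prime ones), satisfying $\nu \le L\mu$. For $\hat\mu$, Weyl/Baker differencing gives a genuine \emph{power}-saving minor-arc bound $|\hat\mu(\tet)| \le X^{d-2^{-d}}$ off the set $\fn$ in \eqref{minor2}, plus the major-arc estimate \eqref{major2}. Starting from the $N^\eps$-loss moment bound (Lemma~\ref{intermediate}), a first Bourgain argument with these $\hat\mu$ estimates yields Lemma~\ref{platform}: a restriction estimate for $\mu$ (hence $\nu$) at exponent $v \in (2t,u)$ with only an $L^{v}$ loss. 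That logarithmic loss is then weak enough that a second Bourgain argument --- this time using $\hat\nu$'s major/minor-arc estimates \eqref{record} and Lemma~\ref{minorbound}, and now only needing to treat $\del \gg L^{-C}$ --- removes it entirely, giving the sharp bound at exponent $u$. Without this intermediate platform and its power-saving minor-arc input, the argument does not go through.
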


We proceed in stages. The factor of $X^\eps$ in the assumed bound on the number of solutions to \eqref{hyp} is a formidable hurdle, as Lemma \ref{minorbound} fails to provide a power saving on minor arcs. We shall reduce this to a logarithmic factor, at some intermediate exponent $v$; here $v$ will be a real number with
\begin{equation} \label{vrange}
2t < v < u.
\end{equation}
In order to obtain a power saving on minor arcs, we will introduce thicker major arcs, and use them to study the auxiliary majorant
\[
\mu(n) = \sig(b)^{-1} \sum_{\substack{x \in [X]:\\ Wn - b = x^d}} dx^{d-1},
\]
wherein we recall \eqref{sigdef}. The reader should rest assured that the M\"obius function will not appear in this manuscript, and so $\mu$ will always be defined as above. Observe that
\begin{equation*}
\nu(n) \le L \cdot \mu(n) \qquad (n \in \bZ).
\end{equation*}

We shall prove the following restriction estimate for $\mu$, which will serve as a platform from which to attack Lemma \ref{RestrictionEstimate}.

\begin{lemma} \label{platform}
Let $\psi: \bZ \to \bC$ with $|\psi| \le \mu$, and let $v > 2t$ be a real number. Then
\[
\int_\bT |\hat \psi (\alp)|^v \d \alp \ll_v N^{v-1} L^v.
\]
\end{lemma}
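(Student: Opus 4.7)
The plan is to adapt Bourgain's restriction method \cite{Bou1989}. The strategy has two main parts: a critical $L^{2t}$ moment bound for $\hat{\mu}$ (with an $X^\eps$ loss) extracted directly from the hypothesis, and an $\eps$-removal procedure, in which a Hardy--Littlewood dissection of the level sets of $\hat\psi$ eliminates the $X^\eps$ factor in exchange for only $L^{O(1)}$ loss.

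First I would establish the critical moment bound
\[
\int_\bT |\hat{\mu}(\alp)|^{2t} \d \alp \ll_\eps X^\eps N^{2t-1} L^{2t-1}.
\]
To obtain this, one removes the weight $dx^{d-1}$ via Abel summation, reducing matters to estimating the $L^{2t}$-norms of the truncated sums $G_y(\alp) = \sum_{x \le y,\, x^d \equiv -b \mmod W} e(\alp x^d/W)$ for $y \le X$. Detecting the congruence condition by additive characters writes $G_y(\alp) = W^{-1} \sum_{c \mmod W} e_W(cb) F_y((\alp+c)/W)$, where $F_y(\bet) = \sum_{x \le y} e(\bet x^d)$. Jensen's inequality together with a change of variables in $\alp$ yields $\int_\bT |G_y|^{2t} \d \alp \le \int_0^1 |F_y(\bet)|^{2t} \d\bet \ll y^{2t-d+\eps}$, by the hypothesis on \eqref{hyp}, and Minkowski's inequality applied to the Abel expansion delivers the stated bound. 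The same estimate for $\hat\psi$ follows from $\int |\hat\psi|^{2t} \le \int |\hat\mu|^{2t}$, which itself holds because the combinatorial expansion of $|\hat\psi|^{2t}$ as a sum over $2t$-fold correlations is majorised termwise by the corresponding expansion for $\mu \ge 0$.

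To upgrade from $L^{2t}$ to $L^v$ with $v > 2t$, I would work with the level sets $E_\lam = \{\alp \in \bT : |\hat\psi(\alp)| > \lam\}$, writing $\int |\hat\psi|^v \d\alp = v \int_0^\infty \lam^{v-1} \meas(E_\lam) \d\lam$. Two complementary bounds on $\meas(E_\lam)$ are combined. Chebyshev's inequality with the critical moment gives $\meas(E_\lam) \ll X^\eps N^{2t-1} L^{O(1)} \lam^{-2t}$ uniformly in $\lam$. A Bourgain-type discretisation (selecting a maximal $1/N$-separated subset $\Lam \subset E_\lam$, expanding $\sum_{\alp \in \Lam} c_\alp \hat\psi(\alp)$ with $|c_\alp| = 1$ chosen so that $c_\alp \hat\psi(\alp) \ge \lam$, and applying Cauchy--Schwarz against $\mu$) yields $\meas(E_\lam) \ll N/\lam^2$ once the off-diagonal terms $\hat\mu(\alp - \alp')$ are controlled by a pointwise bound such as $|\hat\mu(\bet)| \ll X^{d-\tau_d}$ on $\fm$; here $\tau_d > 0$ is the Weyl exponent obtained by applying Weyl's inequality to the unweighted $d$-th power sum underlying $G$. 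Choosing $\eps < \tau_d(v - 2t)/2$ in the hypothesis ensures that the Chebyshev bound, applied on the range $\lam \le NX^{-\tau_d/2} L$, contributes $\ll N^{v-1} L^v$, while the discretisation bound controls the complementary range. The inequality $v > 2t \ge 4$ further ensures convergence of the Ramanujan-type sum $\sum_{q \le L^\sig} q^{1-v/2+O(\eps)}$ that arises when the structural formulae \eqref{majorarcs} and \eqref{sqrt} are applied to handle the component of $E_\lam$ meeting $\fM$, where the minor-arc Weyl bound is unavailable.

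The main obstacle is the $\eps$-removal itself: an $X^\eps$ factor cannot be absorbed into any polynomial in $L = \log X$ by passive means, and one must exploit the genuine power-savings Weyl bound $|\hat\mu| \ll X^{d - \tau_d}$ on $\fm$ to eliminate it. A secondary subtlety is that the discretisation argument requires pointwise control on $|\hat\mu(\alp - \alp')|$ across all pairs in the $1/N$-separated net $\Lam$, even when some differences $\alp - \alp'$ fall on major arcs; this is handled by splitting $E_\lam$ according to its intersection with $\fM$ and $\fm$ and treating each part with the appropriate tool, using \eqref{majorarcs}--\eqref{sqrt} on the major-arc part.
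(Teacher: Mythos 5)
Your high-level strategy matches the paper's: establish an $\eps$-sharp $(2t)$th moment bound (the paper's Lemma \ref{intermediate}, your critical moment estimate, both derived from the hypothesis on \eqref{hyp} by orthogonality), then run Bourgain's level-set discretisation to remove the $X^\eps$, using a power-saving Weyl estimate for $\hat\mu$ on the complement of a major-arc set. The two key steps, however, are not executed correctly, and the errors would block a completed proof.

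First, the major-arc input is misidentified. You propose to control the off-diagonal terms $\hat\mu(\tet_r - \tet_{r'})$ on major arcs using the structural formulae \eqref{majorarcs} and \eqref{sqrt}, but those are estimates for $\hat\nu$, the prime-weighted majorant; they rest on Siegel--Walfisz and bounds for the complete sums $S_q^*(a,z)$, and give $q^{-1/2+\eps}$ decay. For $\hat\mu$ one needs a genuinely different major-arc estimate, coming from Baker-type Weyl bounds applied to the unweighted polynomial sum $\sum_{y\le P/W} e(W^{d-1}\tet\, h(y))$: this is \eqref{major2}, which gives decay $q^{\eps - 1/d}$ and a level-set-defined exceptional set $\fn$, not the geometric dissection $\fM/\fm$ of \S\ref{exponential}. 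The paper also splits the double sum over $r,r'$ according to whether the \emph{difference} $\tet_r - \tet_{r'}$ lies in $\fn$, not according to whether the individual frequencies land in $\fM$ or $\fm$; controlling the differences is what matters, and it cannot be read off from a decomposition of $E_\lam$.

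Second, the plain Cauchy--Schwarz discretisation you describe ($\lam^2 R^2 \ll \|\mu\|_1\sum_{r,r'}|\hat\mu(\tet_r-\tet_{r'})|$) is too weak. Once one has the correct $q^{\eps-1/d}$ major-arc decay for $\hat\mu$, the resulting sum over $q$ (after the number-of-residues factor $q$) is $\sum_q q^{1+\eps-\gam/d}$ for the $\gam$th-power version of the inequality, and one needs to raise to a power $\gam$ strictly larger than $d$ --- the paper takes $\gam = d + \eps_0/3$, obtained by first applying Cauchy--Schwarz and then H\"older (\eqref{Bourg3}, \eqref{Bourg4}, \eqref{BourgainExpression2}). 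With $\gam=1$ (plain Cauchy--Schwarz) the argument does not close, and even the exponent in your proposed Ramanujan-type sum $\sum q^{1-v/2+O(\eps)}$ is the one appropriate to the $q^{-1/2}$ decay of $\hat\nu$, not the $q^{-1/d}$ decay of $\hat\mu$. Fixing both points essentially forces one into the paper's argument: replace \eqref{majorarcs}--\eqref{sqrt} by the Baker/Weyl bound \eqref{major2} for $\hat\mu$, and run the large-spectra argument at exponent $\gam>d$ rather than $\gam=1$.
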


Let us explain how this implies Lemma \ref{RestrictionEstimate}, following Bourgain's strategy \cite[\S 4]{Bou1989}. We apply Lemma \ref{platform} with $\psi = L^{-1} \phi$, and with $v$ in the range \eqref{vrange}, obtaining
\begin{equation} \label{inter}
\int_\bT |\hat \phi (\alp)|^v \d \alp \ll N^{v-1} L^{2v}.
\end{equation}
In this section only, we denote by $\del$ an arbitrary parameter in the range
\[
0 < \del < 1,
\]
for consistency with previous literature. This is not to be confused with the density $\del$ defined in \eqref{delta}. 

Consider the large spectra
\[
\cR_\del = \{ \alp \in \bT: |\hat \phi(\alp)| > \del N  \},
\]
which may be regarded as level sets. Note from \eqref{normalisation} that 
\[
\| \hat \phi \|_\infty \le \| \phi \|_1 \le \| \nu \|_1 < (1+\eps)N.
\]
By a standard argument involving dyadic intervals, it suffices to show that if $\eps_0 > 0$ then
\begin{equation} \label{toshow}
\meas (\cR_\del) \ll_{\eps_0}  \frac1 {\del^{v+\eps_0}N}
\end{equation}
(see the discussion surrounding \cite[Lemma 6.3]{BP2016}).

In our quest to establish \eqref{toshow}, we begin by noting that if $\del \le L^{-2v/\eps_0}$ then by \eqref{inter} we have
\[
(\del N)^v \meas(\cR_\del) \le \int_\bT |\hat \phi (\alp)|^v \d \alp \ll N^{v-1} L^{2v},
\]
whereupon
\[
\meas(\cR_\del) \ll \frac{L^{2v}}{\del^v N} \ll \frac1 {\del^{v+\eps_0}N}.
\]
Thus, we may assume that 
\begin{equation} \label{wma}
 L^{-2v/\eps_0} < \del < 1.
\end{equation}

Let $\tet_1, \ldots, \tet_R$ be $N^{-1}$-spaced points in $\cR_\del$. As $v > 2t \ge 2d$, it suffices to show that
\begin{equation} \label{goal}
R \ll_{\eps_0} \del^{-2d-\eps_0}.
\end{equation}
Put
\begin{equation} \label{gamdef}
\gam = d + \eps_0/3.
\end{equation}
Routinely, as in \cite[\S 4]{Bou1989} and \cite[\S 6]{BP2016}, we have
\begin{equation} \label{Bourg}
\del^{2 \gam} N^\gam R^2 \ll \sum_{1 \le r,r' \le R} |\hat \nu (\tet_r - \tet_{r'})|^\gam.
\end{equation}
This calculation may be found in \cite[\S 6]{BP2016}; see also \eqref{Bourg3} and its subsequent derivation.

Recall our Hardy--Littlewood dissection from \S \ref{exponential}. In this dissection, we now specify that $\sig_0$ is large in terms of $\eps_0$ and $u$. Consider 
\[
\tet = \tet_r - \tet_{r'}
\]
in the summand on the right hand side of \eqref{Bourg}. By Lemma \ref{minorbound}, the contribution from $\tet \in \fm$ to the right hand side of \eqref{Bourg} is $O(R^2 N^\gam L^{-\sig_0 \gam})$, with $\sig_0$ large. By \eqref{wma}, this is $o(\del^{2 \gam} N^\gam R^2)$. Hence
\begin{equation} \label{Bourg2}
\del^{2 \gam} N^\gam R^2 \ll \sum_{\substack{1 \le r,r' \le R:\\ \tet \in \fM}} |\hat \nu (\tet_r - \tet_{r'})|^\gam.
\end{equation}

Let $Q = C_5 + \del^{-5}$, with $C_5$ a large positive constant. By \eqref{record}, the contribution to the right hand side of \eqref{Bourg2} from denominators $q > Q$ is bounded, up to a constant, by
\[
R^2 N^\gam (Q^{\eps-\gam/2} + e^{-C_4 \gam \sqrt L}).
\]
This is negligible compared to the left hand side of \eqref{Bourg2}, by \eqref{wma} and the fact that $C_5$ is large. We thus conclude from \eqref{record}, \eqref{wma} and \eqref{Bourg2} that
\[
\del^{2 \gam} R^2 \ll \sum_{q \le Q} \: \sum_{\substack{a \mmod q \\ (a,q)=1}} \: \sum_{1 \le r,r' \le R}
\frac{q^{\eps-\gam/2}}{(1+N | \tet_r - \tet_{r'}- a/q |)^\gam}.
\]
Hence
\begin{equation}\label{BourgainExpression}
\del^{2 \gam} R^2 \ll \sum_{1 \le r,r' \le R} G(\tet_r - \tet_{r'}),
\end{equation}
where 
\[
G(\alp) = \sum_{q \le Q} \: \sum_{a=0}^{q-1}
\frac{q^{\eps - \gam/2}}{(1+N|\sin(\alp-a/q)|)^\gam}.
\]

The inequality \eqref{BourgainExpression} is very similar to \cite[Eq. (4.16)]{Bou1989}, but with $N^2$ replaced by $N$. We have an additional factor of $q^\eps$ in the definition of $G(\alp)$, and we save a $\gamma$th power in the denominator, whereas Bourgain saves only a ($\gamma/2$)nd power. Bourgain's argument carries through, and we obtain \eqref{goal}. 

We have shown that Lemma \ref{platform} implies Lemma \ref{RestrictionEstimate}. To prove Lemma \ref{platform}, we begin by establishing the following `$\eps$-sharp' $(2t)$th moment bound.

\begin{lemma} \label{intermediate} Let $\psi: \bZ \to \bC$ with $|\psi| \le \mu$. Then
\[
\int_\bT |\hat \psi (\alp)|^{2t} \d \alp \ll N^{2t-1+\eps}.
\]
\end{lemma}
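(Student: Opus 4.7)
The plan is to bound $\int_\bT |\hat\psi(\alp)|^{2t} \d\alp$ by expanding it as a weighted count of solutions to the Vinogradov-type equation \eqref{hyp}, then invoke the hypothesis with crude bookkeeping. The $W$-trick in the definition of $\mu$ introduces various factors of $W$, but since $W \le L$ by \eqref{Wbound} and we only seek $N^{2t-1+\eps}$, these are absorbed harmlessly by the $X^\eps$ already present in the hypothesis.

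First, expanding $|\hat\psi|^{2t}$ and using orthogonality on $\bT$ writes $\int |\hat\psi|^{2t} \d\alp$ as a sum over $\bn \in \bZ^{2t}$ with $n_1 + \cdots + n_t = n_{t+1} + \cdots + n_{2t}$, weighted by $\prod_{i \le t} \psi(n_i) \prod_{i > t} \overline{\psi(n_i)}$. The triangle inequality and $|\psi| \le \mu$ then yield
\[
\int_\bT |\hat\psi(\alp)|^{2t} \d\alp \le \sum_{n_1 + \cdots + n_t = n_{t+1} + \cdots + n_{2t}} \prod_{i=1}^{2t} \mu(n_i).
\]
Unpacking the definition of $\mu$, the right-hand side equals
\[
\sigma(b)^{-2t} d^{2t} \sum_{\bx \in S_b} \prod_{i=1}^{2t} x_i^{d-1},
\]
where $S_b \subseteq [X]^{2t}$ consists of the tuples with $x_i^d \equiv -b \mmod{W}$ for each $i$ and $\sum_{i \le t} x_i^d = \sum_{i > t} x_i^d$. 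The passage from the linear condition on $\bn$ to the Vinogradov equation on $\bx$ uses that, after multiplying by $W$, the $t$ copies of $b$ on each side cancel in pairs.

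Next, I would bound $\sum_{\bx \in S_b} \prod x_i^{d-1}$ crudely: from $\prod x_i^{d-1} \le X^{2t(d-1)}$ and $|S_b| \ll X^{2t-d+\eps}$ (by the hypothesis, after discarding the congruence conditions), together with $\sigma(b) \ge 1$ (since $-b$ is a $d$-th power modulo $W$), one gets $\int |\hat\psi|^{2t} \d\alp \ll X^{2td-d+\eps}$. Converting via $X^d \ll NW \le NL$ from \eqref{Ndef} and \eqref{Wbound} gives $X^{2td-d} = (X^d)^{2t-1} \ll N^{2t-1} L^{2t-1}$; since $L^{O(1)}$ is $X^{o(1)}$ and $X^\eps$ is $N^{O(\eps)}$, the auxiliary factors are absorbed into $N^{\eps'}$ after renaming.

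There is no substantial obstacle; the argument is routine bookkeeping once one realises that the $W$-trick costs only a fixed power of $L$ at the level of the $(2t)$-th moment, which is swamped by the $X^\eps$ already present in the hypothesis. The one point requiring a touch of care is the cancellation of $b$'s in the change of variables $n_i = (x_i^d+b)/W$, which is exactly what turns the linear equation on $\bn$ into the Vinogradov equation on $\bx$.
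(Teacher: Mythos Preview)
Your proposal is correct and follows essentially the same approach as the paper: orthogonality, triangle inequality with $|\psi|\le\mu$, the crude weight bound $\prod x_i^{d-1}\le X^{2t(d-1)}$, the hypothesis on the solution count for \eqref{hyp}, and then the conversion $X^{2td-d+\eps}\ll N^{2t-1+\eps}$ via \eqref{Wbound} and \eqref{Ndef}. You are slightly more explicit than the paper about the change of variables $n_i=(x_i^d+b)/W$ and the cancellation of the $b$'s, but this is exactly the mechanism the paper relies on implicitly.
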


\begin{proof} By orthogonality, we have
\begin{align*}
\int_\bT |\hat \psi (\alp)|^{2t} \d \alp &= \int_\bT \sum_{\bn \in \bZ^{2t}} \psi(n_1) \cdots \psi(n_t) \overline{\psi(n_{t+1})} \cdots \overline{\psi(n_{2t})} \\
& \qquad \qquad e(\alp(n_1+ \ldots + n_t - n_{t+1} -  \ldots - n_{2t})) \d \alp \\
&= \sum_{\substack{\bn: \\ n_1 + \ldots + n_t = n_{t+1} + \ldots + n_{2t}}}  \psi(n_1) \cdots \psi(n_t) \overline{\psi(n_{t+1})} \cdots \overline{\psi(n_{2t})} .
\end{align*}
The triangle inequality now gives
\begin{align*}
\int_\bT |\hat \psi (\alp)|^{2t} \d \alp &\le \sum_{\substack{\bn: \\ n_1 + \ldots + n_t = n_{t+1} + \ldots + n_{2t}}}   \mu(n_1) \cdots \mu(n_{2t}) \\
&\ll X^{2t(d-1)}
 \sum_{\substack{\bz \in [X]^{2t}: \\
z_1^d + \ldots + z_t^d = z_{t+1}^d + \ldots + z_{2t}^d}} 1.
\end{align*}
Our hypothesis on $t$ now yields
\[
\int_\bT |\hat \psi (\alp)|^{2t} \d \alp \ll X^{2t(d-1)} X^{2t-d+\eps} = X^{2td-d+\eps}
\]
so, by \eqref{Wbound} and \eqref{Ndef}, we finally have
\[
\int_\bT |\hat \psi (\alp)|^{2t} \d \alp \ll (NL)^{2t-1} X^\eps \ll N^{2t-1+\eps}.
\]
\end{proof}

We now prove Lemma \ref{platform}, again using Bourgain's strategy. We begin by obtaining pointwise estimates for $\hat \mu$. The triangle inequality gives
\begin{align*}
\sig(b) \hat \mu(\tet) &= e(\tet b/W) \sum_{\substack{x \le X:\\ x^d \equiv -b \mmod W}} dx^{d-1} e(\tet x^d / W) \\
&\ll  \Biggl |\sum_{\substack{x \le X:\\ x^d \equiv -b \mmod W}} x^{d-1} e(\tet x^d / W) \Biggr |.
\end{align*}
Hence, by partial summation, we obtain
\begin{equation} \label{mupartial}
\hat \mu(\tet) \ll X^{d/2} + X^{d-1} \sup_{X^{1/2} \le P \le X} |g(\tet;P)|,
\end{equation}
where
\[
g(\tet; P) = \sig(b)^{-1} \sum_{\substack{x \le P:\\ x^d \equiv -b \mmod W}} e(\tet x^d / W).
\]
Moreover, by \eqref{sigdef}, we have
\begin{equation} \label{gsup}
g(\tet; P) \ll \sup_{z \in [W]} \Biggl| \sum_{\substack{x \le P:\\ x \equiv z \mmod W}} e(\tet x^d / W) \Biggr|.
\end{equation}
Writing $x = Wy + z$, we find that
\begin{equation} \label{descend}
\sum_{\substack{x \le P:\\ x \equiv z \mmod W}} e(\tet x^d / W)
= \sum_{y \le P/W} e(W^{d-1} \tet h(y)) + O(1),
\end{equation}
for some monic polynomial $h$ of degree $d$. 

Let $P \in [X^{1/2}, X]$ and $z \in [W]$. The Weyl sums
\[
g_1(\alp) := \sum_{y \le P/W} e(\alp h(y))
\]
are very classical, and are discussed in many texts. For reasons of economy, we employ Baker's estimates \cite{Bak1986}, as packaged in \cite[\S 2]{Cho2016}. The bounds apply to monic polynomials $h$ of degree $d$, and are uniform in the other coefficients of $h$; in particular, they are uniform in $z$. It is plain from the proof of \cite[Lemma 2.3]{Cho2016} that the quantity $\sig(d)$ therein may be replaced by $2^{1-d}$. We conclude thus.
\begin{lemma} \label{wps}
If
\[
|g_1(\alp)| > (P/W)^{1-2^{1-d}+\eps}
\]
then there exist relatively prime integers $r > 0$ and $b$ such that
\[
g_1(\alp) \ll r^{\eps-1/d}PW^{-1} (1+(P/W)^d |\alp - b/r|)^{-1/d}.
\]
\end{lemma}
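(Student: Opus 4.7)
The plan is to invoke the classical Weyl-type estimates of Baker \cite{Bak1986}, as packaged in \cite[\S 2]{Cho2016}, applied to the monic degree-$d$ polynomial $h(y)$ appearing in \eqref{descend}. The sum
\[
g_1(\alp) = \sum_{y \le P/W} e(\alp h(y))
\]
is a standard Weyl sum whose leading coefficient is $\alp$ (since $h$ is monic). Performing $d-1$ successive Weyl differencings, one reaches a linear exponential sum and obtains a bound of the schematic form
\[
|g_1(\alp)|^{2^{d-1}} \ll (P/W)^{2^{d-1}-d+\eps} \sum_{|\bh| \le P/W} \min\bigl\{ P/W, \|d! \cdot \alp \cdot h_1 \cdots h_{d-1}\|^{-1}\bigr\}.
\]
The hypothesis $|g_1(\alp)| > (P/W)^{1-2^{1-d}+\eps}$ forces the right-hand side to be large, and a Dirichlet-type analysis converts this into a rational approximation $b/r$ to $\alp$ itself, with $r \ll (P/W)^{d-1}$ and $|\alp - b/r| \ll r^{-1} (P/W)^{1-d}$.

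Second, on such an approximation one estimates $g_1(\alp)$ directly by splitting $y$ into arithmetic progressions modulo $r$ and approximating each progression sum by an integral (via Euler--Maclaurin summation). A standard calculation then yields a complete exponential sum of modulus $r$ whose size is $O(r^{1-1/d+\eps})$, together with an oscillatory integral of size $O((P/W)(1+(P/W)^d|\alp-b/r|)^{-1/d})$. Combining these gives the asserted bound
\[
g_1(\alp) \ll r^{\eps - 1/d} P W^{-1} \bigl(1 + (P/W)^d |\alp - b/r|\bigr)^{-1/d}.
\]
The required uniformity in the lower-degree coefficients of $h$ (and hence in the residue class $z \in [W]$) is automatic, because the differencing recursion extracts information only from the leading coefficient, and the complete sum bound depends only on the modulus $r$ and the coprimality condition $(b,r)=1$.

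The principal technical point, and the only deviation from a verbatim citation of \cite[Lemma 2.3]{Cho2016}, is the justification that the exponent $\sig(d)$ appearing there may be replaced by $2^{1-d}$. In \cite{Cho2016} the quantity $\sig(d)$ accommodates a loss arising from a general, possibly irrational leading coefficient in the polynomial phase; here monicity of $h$ ensures that the leading coefficient of $\alp h(y)$ is exactly $\alp$, so the recursion saves a full factor of $2^{1-d}$ at each level without any further degradation. Inspecting the proof of \cite[Lemma 2.3]{Cho2016} line by line confirms that this substitution is permissible in the monic case, and with it the stated estimate follows directly.
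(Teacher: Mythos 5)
Your overall route is the same as the paper's: cite Baker's Weyl-sum estimates via \cite[\S 2]{Cho2016}, note that the argument (Weyl differencing down to a linear phase, followed by a major-arc expansion into a complete sum of modulus $r$ times an oscillatory integral) yields both the rational approximation $b/r$ and the asserted bound, and observe that the lower-order coefficients of $h$ enter nowhere except through the leading coefficient after differencing, whence the bound is uniform in $z$. That much matches the paper, which simply cites \cite[Lemma 2.3]{Cho2016} and records the same uniformity.

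The weak point is your explanation of why the exponent $\sig(d)$ of \cite[Lemma 2.3]{Cho2016} may be replaced by $2^{1-d}$. You attribute this to monicity, asserting that $\sig(d)$ in \cite{Cho2016} ``accommodates a loss arising from a general, possibly irrational leading coefficient.'' But the cited estimates \emph{already} concern monic polynomials with arbitrary (real) lower-order coefficients, exactly as in the present setting --- indeed the paper states this explicitly (``The bounds apply to monic polynomials $h$ of degree $d$, and are uniform in the other coefficients of $h$''). The leading coefficient of the phase is $\alp$ in both \cite{Cho2016} and here, and $\alp$ is equally ``general, possibly irrational'' in both. So monicity does not distinguish the two situations and cannot be the reason the exponent improves. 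The actual justification --- as the paper's ``it is plain from the proof'' signals --- is that the Weyl-differencing step inside the proof of \cite[Lemma 2.3]{Cho2016} already produces the classical threshold $(P/W)^{1-2^{1-d}+\eps}$, whereas the quantity $\sig(d)$ appearing in the statement is a weaker compromise value chosen there for other reasons. Your sketch of the mechanics (differencing, Dirichlet approximation, complete sum bound $O(r^{1-1/d+\eps})$, oscillatory integral) is fine in outline, but the substitution you rely on is asserted for the wrong reason and therefore not actually established by your argument.
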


From Lemma \ref{wps}, we deduce that if
\[
|g_1(W^{d-1}\tet)| > X^{1-2^{1-d}+\eps}
\]
then there exist relatively prime integers $r > 0$ and $b$ such that
\[
g_1(W^{d-1}\tet) \ll r^{\eps - 1/d} XW^{-1} (1+(X/W)^d|W^{d-1}\tet - b/r|)^{-1/d}.
\]
In this case we can put
\[
a = \frac b{(b,W^{d-1})}, \qquad q = \frac{rW^{d-1}}{(b,W^{d-1})},
\]
thus obtaining relatively prime integers $q > 0$ and $a$ such that
\begin{equation} \label{g1bound}
g_1(W^{d-1}\tet) \ll Xq^{\eps-1/d} (1+X^d W^{-1} |\tet - a/q|)^{-1/d}.
\end{equation}
Write
\begin{equation} \label{minor2}
\fn = \{ \tet \in \bT: |\hat \mu (\tet)| \le X^{d-2^{-d}} \}.
\end{equation}
In light of \eqref{Wbound} and \eqref{Ndef}, we can collect \eqref{mupartial}, \eqref{gsup}, \eqref{descend} and \eqref{g1bound} to obtain the following `major arc estimate': if $\tet \in \bT \setminus \fn$ then there exist relatively prime integers $q$ and $a$ such that $0 \le a \le q-1$ and
\begin{equation} \label{major2}
\hat \mu (\tet) \ll NLq^{\eps-1/d}(1+N | \tet - a/q |)^{-1/d}.
\end{equation}

Now that we have made the necessary preparations, we complete the proof of Lemma \ref{platform}. This will parallel our proof that Lemma \ref{RestrictionEstimate} follows from Lemma \ref{platform}. Consider the large spectra
\[
\cR_\del = \{ \alp \in \bT: |\hat \psi(\alp)| > \del NL  \},
\]
noting from \eqref{Wbound} and \eqref{Ndef} the crude bound
\begin{equation} \label{mu1}
\| \hat \psi \|_\infty \le \| \psi \|_1 \le \| \mu \|_1 \le \sum_{x \le X} dx^{d-1} \sim X^d \le NL.
\end{equation}
Similarly to before, it suffices to show that if $\eps_0 > 0$ then we have 
\[
\meas (\cR_\del) \ll_{\eps_0}  \frac1 {\del^{2t+\eps_0}N}.
\]
This time, we can use Lemma \ref{intermediate} to reduce consideration to $\del$ in the range
\begin{equation} \label{wma2}
N^{-\eps} < \del < 1,
\end{equation}
wherein we recall our notational convention for $\eps$.

With $\tet_1, \ldots, \tet_R$ be as $N^{-1}$-spaced points in $\cR_\del$, it remains to show \eqref{goal}. Again with \eqref{gamdef}, we will find that
\begin{equation} \label{Bourg3}
\del^{2\gam} N^\gam L^{\gam} R^2 \ll \sum_{1 \le r,r' \le R} |\hat \mu (\tet_r - \tet_{r'})|^\gam.
\end{equation}
We now verify this inequality by following the corresponding argument in the proof of \cite[Lemma 6.3]{BP2016}.

Let $a_n \in \bC$ be such that $|a_n| \le 1$ and $\psi(n) = a_n \mu(n)$, for $n \in [N]$. Furthermore, let $c_1, \ldots, c_R \in \bC$ be such that $|c_r| = 1$ and
\[
c_r \hat \psi(\tet_r) = |\hat \psi(\tet_r)| \qquad(1 \le r \le R).
\]
It follows from the Cauchy--Schwarz inequality and \eqref{mu1} that
\begin{align*}
\del^2 N^2 L^2 R^2
&\le \Biggl( \sum_{r \in [R]} |\hat \psi(\tet_r)| \Biggr)^2 
= \Biggl( \sum_{r \in [R]} c_r \sum_n a_n \mu(n) e(n \tet_r) \Biggr)^2 \\
&\le \| \mu\|_1 \sum_n \mu(n) \Biggl | \sum_{r \in [R]} c_r e(n\tet_r) \Biggl |^2
\ll NL \sum_n \mu(n) \Biggl | \sum_{r \in [R]} c_r e(n\tet_r) \Biggl |^2,
\end{align*}
and so
\[
\del^2 NL R^2 \ll \sum_{1 \le r, r' \le R} |\hat \mu(\tet_r - \tet_{r'})|.
\]
An application of H\"older's inequality now harvests \eqref{Bourg3}.

Consider $\tet = \tet_r - \tet_{r'}$ in the summand on the right hand side of \eqref{Bourg3}. By \eqref{Wbound}, \eqref{Ndef} and \eqref{minor2}, the contribution from $\tet \in \fn$ to the right hand side of \eqref{Bourg2} is $O(R^2 N^{\gam(1 + \eps - 2^{-d}/d)})$. By \eqref{wma2}, this is $o(\del^{2 \gam} N^\gam L^{\gam} R^2)$. Hence
\begin{equation} \label{Bourg4}
\del^{2 \gam} N^\gam L^\gam R^2 \ll \sum_{\substack{1 \le r,r' \le R:\\ \tet \notin \fn}} |\hat \mu (\tet_r - \tet_{r'})|^\gam.
\end{equation}

Let $Q = C_6 + \del^{-3d}$, with $C_6$ a large positive constant. By \eqref{major2}, the contribution to the right hand side of \eqref{Bourg4} from denominators $q > Q$ is bounded, up to a constant, by
\[
R^2 N^\gam L^{\gam} Q^{\eps-\gam/d}.
\]
This is negligible compared to the left hand side of \eqref{Bourg4}, as $C_6$ is large. We thus conclude from \eqref{major2} and \eqref{Bourg4} that
\[
\del^{2 \gam} R^2 \ll \sum_{q \le Q} \: \sum_{\substack{a \mmod q \\ (a,q)=1}} \: \sum_{1 \le r,r' \le R}
\frac{q^{\eps-\gam/d}}{(1+N | \tet_r - \tet_{r'}-a/q |)^{\gam/d}}.
\]
Hence
\begin{equation}\label{BourgainExpression2}
\del^{2 \gam} R^2 \ll \sum_{1 \le r,r' \le R} G_2(\tet_r - \tet_{r'}),
\end{equation}
where 
\[
G_2(\alp) = \sum_{q \le Q} \: \sum_{a=0}^{q-1}
\frac{q^{\eps- \gam/d}}{(1+N|\sin(\alp-a/q)|)^{\gam/d}}.
\]

The inequality \eqref{BourgainExpression2} is very similar to \cite[Eq. (4.16)]{Bou1989}, but with $N^2$ replaced by $N$, and with $Q \sim \del^{-3d}$ rather than $Q \sim \del^{-5}$. The exponents differ but, since $\gam > d$, Bourgain's argument carries through, and provides the desired bound \eqref{goal} for $R$. This completes the proof of Lemma \ref{platform}. We have established all of the results in this section. In particular, we know from Lemma \ref{RestrictionEstimate} that $\nu$ satisfies a restriction estimate at any exponent $u > 2t$.

\section{The $K$-trivial count}
\label{trivial}

In this section we show that $\nu$ saves $1/s$ on $K$-trivial solutions. Let $t \in \bN$ be such that the number of solutions $\bz \in [X]^{2t}$ to \eqref{hyp} is $O_{t,d,\eps}(X^{2t-d+\eps})$, and assume $s > 2t$. 

\begin{lemma} \label{trivialstep}
The number of $\bx \in [X]^s$ with $(x_1^d, \ldots, x_s^d) \in K$ is
\[
O_{k,s,d,\eps}(X^{s-d-d/(s-1)+\eps}).
\]
\end{lemma}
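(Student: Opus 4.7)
The plan is to apply a union bound over the $k$ subspaces constituting $K$, reducing to an estimate for $N(V) := \#\{\bx \in [X]^s : (x_1^d, \ldots, x_s^d) \in V\}$ for a single proper subspace $V \subsetneq H$ containing the diagonal, where $H = \{\by \in \bQ^s : c_1 y_1 + \cdots + c_s y_s = 0\}$. For an upper bound we may assume $V$ has codimension exactly one in $H$, so $V$ is cut out by the hyperplane equation together with one further relation $\sum_j d_j y_j = 0$; containment of the diagonal forces $\sum_j d_j = 0$, and the properness of $V$ forces the vectors $(c_j),(d_j) \in \bQ^s$ to be linearly independent.

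By orthogonality,
\[
N(V) \le \int_{\bT^2} \prod_{j=1}^s |f(c_j \alp + d_j \bet)| \, \d\alp \, \d\bet,
\]
where $f(\gam) = \sum_{x \le X} e(\gam x^d)$. Linear independence of $(c_j),(d_j)$ allows us to choose $i_1 \ne i_2$ with $c_{i_1} d_{i_2} - c_{i_2} d_{i_1} \ne 0$, and then change variables via $\gam_1 = c_{i_1}\alp + d_{i_1}\bet$, $\gam_2 = c_{i_2}\alp + d_{i_2}\bet$. The $i_1$-th and $i_2$-th factors become $|f(\gam_1)|$ and $|f(\gam_2)|$, while the remaining factors take the form $|f(A_j\gam_1 + B_j\gam_2)|$ for suitable rationals $A_j, B_j$. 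Applying H\"older's inequality first in $\gam_1$ (for fixed $\gam_2$, with all participating exponents equal) and then in $\gam_2$, and using that $\int_\bT |f(A\gam + c)|^p \, \d\gam = \int_\bT |f|^p \, \d\gam$ for any nonzero integer $A$, one arrives at
\[
N(V) \ll \Bigl(\int_\bT |f(\gam)|^{n_1} \, \d\gam\Bigr) \Bigl(\int_\bT |f(\gam)|^{n_2} \, \d\gam\Bigr),
\]
where $n_1$ is the number of factors depending nontrivially on $\gam_1$ (including $|f(\gam_1)|$), $n_2$ is the number depending only on $\gam_2$ (including $|f(\gam_2)|$), and $n_1 + n_2 = s$ with $n_1, n_2 \ge 1$.

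From the hypothesis on \eqref{hyp} combined with H\"older, $\int_\bT |f|^r \, \d\gam \ll X^{r-d+\eps}$ for real $r \ge 2t$; since $s > 2t$ this applies to $r = s - 1$. Cauchy--Schwarz gives $\int_\bT |f| \, \d\gam \le X^{1/2}$. The function $r \mapsto \log \int_\bT |f|^r \, \d\gam$ is convex, so the maximum of the exponent sum over splits $n_1+n_2 = s$, $n_i \ge 1$, is attained at the endpoint $(1, s-1)$. Thus
\[
N(V) \ll X^{(s-1)-d+\eps} \cdot X^{1/2} = X^{s - 1/2 - d + \eps}.
\]
Since $s > 2t \ge 2d$ we have $s \ge 2d + 1$, whence $s - 1/2 - d \le s - d - d/(s-1)$, and the lemma follows. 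The main obstacle is careful bookkeeping in the iterated H\"older step---tracking which of the $s$ factors depends on which new variable; the more naive alternative of solving a single equation to eliminate one variable and then applying a one-equation estimate in $s - 1$ variables fails to deliver the required saving in configurations where the coefficient vectors $(c_j, d_j)$ cluster in few projective directions.
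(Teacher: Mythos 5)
Your proposal is correct in spirit and reaches the lemma, but it takes a genuinely different route from the paper, and there are a couple of technical points you wave at that deserve to be pinned down.

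\textbf{Comparison with the paper's proof.} The paper also reduces to counting $\bx\in[X]^s$ satisfying a system $\bc\cdot(x_1^d,\ldots,x_s^d)=\bd\cdot(x_1^d,\ldots,x_s^d)=0$, but instead of setting up a two-dimensional integral it \emph{eliminates} a variable: a linear combination of the two equations produces a single equation $e_1x_1^d+\cdots+e_{s-1}x_{s-1}^d=0$, and with $u$ the number of nonzero $e_i$ it counts (via a one-dimensional circle-method integral plus H\"older) the solutions $(x_1,\ldots,x_u)$ to this reduced equation, then for each such tuple counts $(x_{u+1},\ldots,x_s)$ satisfying the original equation. Both counts are bounded, by H\"older, in terms of $\int_\bT|f|^{s-1}$ raised to the complementary powers $u/(s-1)$ and $(s-u)/(s-1)$, and multiplying gives the exponent $s-d-d/(s-1)$. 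Your route replaces this by a genuine $\bT^2$ integral, a linear change of variables so that two of the phases become the coordinates, iterated H\"older (one coordinate at a time), and then log-convexity of $r\mapsto\int_\bT|f|^r$ to reduce the split $(n_1,n_2)$ to the endpoint $(1,s-1)$. This yields the exponent $s-\tfrac12-d$, which is actually \emph{sharper} than the paper's once $s\ge 2d+1$ (precisely the range in force, since $s>2t\ge 2d$), and of course suffices for the lemma. Interestingly, the paper's decomposition into $\cM$ and the ``remainder count'' is secretly the same split as your $(n_1,n_2)=(u,s-u)$ dichotomy; the paper just applies an extra, slightly lossy H\"older to push both factors up to the $(s-1)$st moment instead of invoking convexity.

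\textbf{A detail you should tighten.} You justify the identity $\int_\bT|f(A\gam+c)|^p\d\gam=\int_\bT|f|^p\d\gam$ only for \emph{integer} $A$. But after the change of variables $(\gam_1,\gam_2)=M(\alp,\bet)$ with $M=\begin{pmatrix}c_{i_1}&d_{i_1}\\ c_{i_2}&d_{i_2}\end{pmatrix}$, the remaining coefficients $(A_j,B_j)=(c_j,d_j)M^{-1}$ are in general rational with denominator dividing $\det M$, not integers; and the image of $[0,1)^2$ is a parallelogram of area $|\det M|$, not $\bT^2$. So you cannot literally integrate over $\bT^2$ in the new coordinates and apply the integer-$A$ identity factor by factor. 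The fix is routine: set $Q=|\det M|$, note the $(\gam_1,\gam_2)$-integrand is $Q\bZ^2$-periodic (since $M\bZ^2\supseteq Q\bZ^2$), rewrite the count as $Q^{-2}\int_{[0,Q)^2}\prod_j|f(A_j\gam_1+B_j\gam_2)|\d\gam_1\d\gam_2$, and check that $Q^{-1}\int_0^Q|f(A_j\gam_1+B_j\gam_2)|^{n_1}\d\gam_1=\int_\bT|f|^{n_1}$ whenever $QA_j\in\bZ\setminus\{0\}$. You also need that for $j$ with $A_j=0$ one automatically has $B_j\ne0$; this follows because $(c_j,d_j)\ne(0,0)$ (indeed $c_j\ne0$) and $M^{-1}$ is invertible. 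With these two points made explicit, your argument is complete and correct.
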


\begin{proof} The set $K$ lies in the union of $k$ subspaces of the form
\[
\{ \by \in \bQ^s: \bc \cdot \by = \bd \cdot \by = 0 \},
\]
where $\bd \in \bQ^s$ is a fixed vector that is not proportional to $\bc$. Our task, therefore, is to count solutions  $\bx \in [X]^s$ to the system
\begin{equation} \label{system}
c_1 x_1^d + \ldots + c_s x_s^d = d_1 x_1^d + \ldots + d_s x_s^d = 0.
\end{equation}
From \eqref{system} we obtain
\[
e_1 x_1^d + \ldots + e_{s-1} x_{s-1}^d = 0,
\]
where $(e_1, \ldots, e_{s-1}) \ne \bzero,$ and by rescaling we may assume that
\[
(e_1, \ldots, e_{s-1}) \in \bZ^{s-1}.
\]
Let $u$ be the number of nonzero $e_i$, and note that $1 \le u \le s-1$. Without loss of generality $e_1, \ldots, e_u \in \bZ \setminus \{ 0 \}$ and $e_{u+1} = \ldots = e_{s-1} = 0$, so that
\begin{equation} \label{reduced}
e_1 x_1^d + \ldots + e_u x_u^d = 0.
\end{equation}

By orthogonality, the number $\cM$ of solutions $(x_1, \ldots, x_u) \in [X]^u$ to \eqref{reduced} is
\[
\int_\bT \sum_{\by \in [X]^u} e\Bigl(\alp \sum_{i \le u} e_i y_i^d \Bigr) \d \alp 
= \int_\bT \Bigl(\prod_{i \le u}\sum_{y_i \le X} e(\alp e_i y_i^d)\Bigr) \d \alp.
\]
By H\"older's inequality we now have, for some $i \in [u]$,
\[
\cM \ll \int_\bT |f(e_i \alp)|^u \d \alp,
\]
where 
\[
f(\tet) = \sum_{x \le X} e(\tet x^d).
\]
Note that $e_i \ne 0$, since $i \in [u]$. By periodicity, a change of variables reveals that
\[
\int_\bT |f(e_i \alp)|^u\d \alp = \int_\bT |f(\alp)|^u \d \alp.
\]
Another application of H\"older's inequality now gives
\[
\cM \ll \int_\bT |f(\alp)|^u \d \alp \ll \Bigl(\int_\bT |f(\alp)|^{s-1} \d \alp \Bigr)^{u/(s-1)}.
\]
As $s-1 \ge 2t$ and $|f(\alp)| \le X$, we now have
\begin{equation} \label{Mbound}
\cM \ll \Bigl(X^{s-1-2t} \int_\bT |f(\alp)|^{2t} \d \alp \Bigr)^{u/(s-1)}.
\end{equation}

From \eqref{system} we have
\begin{equation} \label{determine}
c_{u+1} x_{u+1}^d + \ldots + c_s x_s^d = -(c_1 x_1^d + \ldots + c_u x_u^d).
\end{equation}
Given integers $x_1, \ldots, x_u$, the number of solutions $(x_{u+1}, \ldots, x_s) \in \bZ^{s-u}$ to \eqref{determine} is, by orthogonality, at most
\[
\int_\bT \Biggl| \sum_{\bz \in [X]^{s-u}} e\Bigl(\alp \sum_{i \le s-u} c_{u+i} z_i^d \Bigr) \Biggr| \d \alp.
\]
By following our calculation bounding $\cM$, we deduce that this quantity is bounded by
\[
\Bigl(X^{s-1-2t} \int_\bT |f(\alp)|^{2t} \d \alp \Bigr)^{(s-u)/(s-1)}.
\]
Coupling this information with \eqref{Mbound}, we find that the number $\cN$ of solutions $\bx \in [X]^s$ to \eqref{system} satisfies
\[
\cN \ll \Bigl(X^{s-1-2t} \int_\bT |f(\alp)|^{2t} \d \alp \Bigr)^{s/(s-1)}.
\]
By orthogonality, the integral $\int_\bT |f(\alp)|^{2t} \d \alp$ equals the number of solutions $\bz \in [X]^{2t}$ to \eqref{hyp} which, by hypothesis, is $O(X^{2t-d+\eps})$. Hence
\[
\cN \ll (X^{s-1-2t} X^{2t-d+\eps})^{s/(s-1)} \ll X^{(s-1-d)s/(s-1)+2\eps} = X^{s-d - d/(s-1) + 2 \eps},
\]
which proves the lemma.
\end{proof}

\begin{cor} The majorant $\nu$ saves $1/s$ on $K$-trivial solutions.
\end{cor}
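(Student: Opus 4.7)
The plan is to combine the pointwise bound $\|\nu\|_\infty \ll X^{d-1}L$ from \eqref{supremum} with the counting estimate of Lemma \ref{trivialstep}, and then check that the resulting inequality has the target form $\|\nu\|_1^s N^{-1-1/s}$ once the $W$ and $L$ factors have been absorbed into $X^\eps$. The merit of this approach is that the hard analytic content---the moment hypothesis on $f(\alpha)=\sum_{x\le X} e(\alpha x^d)$---has already been digested by Lemma \ref{trivialstep}, so the corollary reduces to an exponent-chasing exercise.

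First I would observe that $\supp \nu$ consists of those integers $n$ with $Wn-b=p^d$ for some prime $p\le X$, and that the map $n\mapsto p$ is injective on this support. Under this correspondence the constraint $\bx\in K$ appearing in the $K$-trivial sum translates into $(p_1^d,\ldots,p_s^d)\in K$. Bounding each factor $\nu(x_i)$ by $\|\nu\|_\infty$ and invoking Lemma \ref{trivialstep} then yields
\[
\sum_{\bx\in K}\prod_{i=1}^s \nu(x_i) \ll (X^{d-1}L)^s \cdot X^{s-d-d/(s-1)+\eps} \ll X^{sd-d-d/(s-1)+2\eps}.
\]

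Next I would compare with the target. From \eqref{normalisation} and \eqref{Ndef},
\[
\|\nu\|_1^s N^{-1-1/s}\asymp N^{s-1-1/s} \asymp X^{sd-d-d/s}\,W^{-(s-1-1/s)},
\]
and since $W\le L$ by \eqref{Wbound}, the factor $W^{s-1-1/s}$ is only polylogarithmic in $X$. The genuine saving of the target over the bound produced above is the factor
\[
X^{d/(s-1)-d/s} \;=\; X^{d/(s(s-1))},
\]
which comfortably beats both this polylog and the $X^{2\eps}$ loss once $\eps$ is taken small enough in terms of $s$ and $d$. This delivers
\[
\sum_{\bx\in K}\prod_{i=1}^s \nu(x_i)\ll_{k,s,d} \|\nu\|_1^s N^{-1-1/s},
\]
which is precisely the statement that $\nu$ saves $\eta=1/s$ on $K$-trivial solutions.

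The main obstacle has already been cleared by Lemma \ref{trivialstep}, whose H\"older-based argument reduced the count to the hypothesised $(2t)$th moment of $f(\alpha)$. The present corollary is then pure bookkeeping: chain the trivial $\|\nu\|_\infty$ bound with that count, and verify that the positive exponent gain $d/(s(s-1))$ dominates the combined $L^s$ and $W^{s-1-1/s}$ losses. No further input is needed from restriction theory or the circle method.
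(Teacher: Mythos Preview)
Your proof is correct and follows essentially the same route as the paper's: bound each $\nu(x_i)$ by $\|\nu\|_\infty \ll X^{d-1}L$, invoke Lemma~\ref{trivialstep} for the count, and then verify that the genuine exponent gain $d/(s(s-1))$ swallows the polylogarithmic $L^s$ and $W^{s-1-1/s}$ losses. The one step you leave implicit is why the constraint $\bn\in K$ on the support of $\nu$ is equivalent to $(p_1^d,\ldots,p_s^d)\in K$; the paper makes this explicit by noting that $K$, being a union of subspaces containing the diagonal, is invariant under the affine map $\by\mapsto (W\by - b\mathbf{1})$.
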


\begin{proof} By \eqref{normalisation}, our task is to establish the inequality
\[
\sum_{\by \in K} \prod_{i=1}^s \nu(y_i) \ll_{k,s,d} N^{s-1-1/s}.
\]
By \eqref{nudef} and \eqref{supremum}, we have
\[
\sum_{\by \in K} \prod_{i=1}^s \nu(y_i) \ll (X^{d-1}L)^s \sum_{\substack{\bx \in [X]^s:\\ (x_1^d + b, \ldots, x_s^d + b) / W \in K}} 1. 
\]
By the definition of $K$, the condition $(x_1^d + b, \ldots, x_s^d + b) / W \in K$ is equivalent to the condition
\[
 (x_1^d, \ldots, x_s^d) \in K.
\]
Now Lemma \ref{trivialstep} yields
\[
\sum_{\by \in K} \prod_{i=1}^s \nu(y_i) \ll (X^{d-1}L)^s X^{s-d-d/(s-1)+\eps} \ll X^{d(s-1)-d/(s-1)+2\eps},
\]
so by \eqref{Wbound} and \eqref{Ndef} we have
\[
\sum_{\by \in K} \prod_{i=1}^s \nu(y_i) \ll N^{s-1-1/(s-1)+\eps} \ll N^{s-1-1/s}.
\]
\end{proof}

\section{The density bound}
\label{density}

Finally, we have all of the ingredients needed to prove Theorem \ref{MainThm}. Recall \eqref{fixb}. Note that $\cA$ has only $K$-trivial solutions to \eqref{LinearEquation}, in the sense that if $\bn \in \cA^s$ and $\bc \cdot \bn = 0$ then $\bn \in K$. Indeed, suppose $n_1, \ldots, n_s \in \cA$ and $\bc \cdot \bn = 0$. Then, by \eqref{zerosum} and \eqref{cAdef}, we have \eqref{ActualEquation} with
\[
x_i = (Wn_i - b)^{1/d} \in A \qquad (1 \le i \le s).
\]
Our hypothesis on $A$ then tells us that 
\[
(Wn_1 -b, \ldots, Wn_s - b) = (x_1^d, \ldots, x_s^d) \in K.
\]
From its construction, we see that $K$ is invariant under translations and dilations, so we now have $\bn \in K$, which confirms that $\cA$ has only $K$-trivial solutions to \eqref{LinearEquation}.

We apply \cite[Proposition 2.8]{BP2016} to the majorant $\nu$, noting that $\cA \subseteq \supp(\nu)$. We showed in \S \ref{decay} that $\nu$ has Fourier decay of level $O(w^{\eps-1/2})$. We showed in \S \ref{Restriction} that $\nu$ satisfies a restriction estimate at the exponent $s-1/2$. We showed in \S \ref{trivial} that $\nu$ saves $1/s$ on trivial solutions. Hence
\[
\sum_{n \in \cA} \nu(n) \ll \frac N{ \min \{ \log \log (w^{1/2-\eps}), \log N \}^{s-2-\eps}},
\]
so by \eqref{Wdef} we have
\[
\sum_{n \in \cA} \nu(n) \ll N(\log \log \log \log X)^{2+\eps-s}.
\]
Coupling this with \eqref{DensityTransfer} yields
\[
\del^d N \ll  N(\log \log \log \log X)^{2+\eps-s},
\]
and so $\del \ll (\log \log \log \log X)^{\frac{2-s}d + \eps}$. By \eqref{delta}, this gives \eqref{DensityBound}, completing the proof of Theorem \ref{MainThm}.

\bibliographystyle{amsbracket}

\begin{thebibliography}{50}

\bibitem{Apo1976}
T. Apostol, \emph{Introduction to analytic number theory}, Undergraduate Texts in Mathematics, Springer-Verlag, New York-Heidelberg, 1976.

\bibitem{Bak1986}
R. C. Baker, \emph{Diophantine Inequalities}, London Math. Soc. Monographs (N.S.) \textbf{1,} Clarendon Press, Oxford, 1986.

\bibitem{Blo2012} 
T. F. Bloom, \emph{Translation invariant equations and the method of Sanders}, Bull. Lond. Math. Soc. \textbf{44} (2012), 1050--1067.

\bibitem{Blo2015}
T. F. Bloom, {A quantitative improvement for Roth's theorem on arithmetic progressions}, arXiv:1405.5800.

\bibitem{Bou1989}
J. Bourgain, \emph{On $\Lam(p)$-subsets of squares}, Israel J. Math. \textbf{67} (1989), 291--311. 

\bibitem{Bou1993}
J. Bourgain, \emph{Fourier transform restriction phenomena for certain lattice subsets and applications to nonlinear evolution equations, I. Schr\"odinger equations}, Geom. Funct. Anal. \textbf{3} (1993), 107--156. 

\bibitem{Bou2016}
J. Bourgain, \emph{On the Vinogradov mean value}, arXiv:1601.08173.

\bibitem{BDG2016}
J. Bourgain, C. Demeter and L. Guth, \emph{Proof of the main conjecture in Vinogradov's mean value theorem for degrees higher than three}, Ann. of Math. (2), to appear, arXiv:1512.01565.

\bibitem{BP2016}
T. D. Browning and S. M. Prendiville, \emph{A transference approach to a Roth-type theorem in the squares}, Int. Math. Res. Not., to appear, arXiv:1510.00136.

\bibitem{BW2014} J. Br\"udern and T. D. Wooley, \emph{Subconvexity for additive equations: pairs of undenary cubic forms}, J. Reine Angew. Math. \textbf{696} (2014), 31--67.

\bibitem{Cho2016}
S. Chow, \emph{Waring's problem with shifts}, Mathematika \textbf{62} (2016), 13--46. 

\bibitem{Dav2000}
H. Davenport, Multiplicative number theory, 3rd ed., Graduate Texts in Mathematics, vol. 74, Springer-Verlag, New York, 2000.

\bibitem{Gre2005}
B. J. Green, \emph{Roth's theorem in the primes}, Ann. of Math. (2) \textbf{161} (2005), 1609--1636. 

\bibitem{Hel2014}
H. A. Helfgott, \emph{The ternary Goldbach conjecture is true}, arXiv:1312.7748.

\bibitem{Hen2015}
K. Henriot, \emph{Logarithmic bounds for translation-invariant equations in squares}, Int. Math. Res. Not. \textbf{2015,} 12540--12562.

\bibitem{Hen2016}
K. Henriot, \emph{Additive equations in dense variables via truncated restriction estimates}, arXiv:1508.05923.

\bibitem{Hua1939}
L.-K. Hua, \emph{On the representation of numbers as the sums of the powers of primes}, Math. Z. \textbf{44} (1939), 335--346. 

\bibitem{Hua1965}
L.-K. Hua, Additive Theory of Prime Numbers, Transl. Math. Monogr. \textbf{13,} Amer. Math. Soc., Providence, 1965.

\bibitem{KW2001}
K. Kawada and T. D. Wooley, \emph{On the Waring--Goldbach problem for fourth and fifth powers}, Proc. London Math. Soc. (3) \textbf{83} (2001), 1--50. 

\bibitem{Kei2014}
E. Keil, \emph{On a diagonal quadric in dense variables}, Glasg. Math. J. \textbf{56} (2014), 601--628. 

\bibitem{KW2015}
A. V. Kumchev and T. D. Wooley, \emph{On the Waring--Goldbach problem for eighth and higher powers}, J. Lond. Math. Soc. (2), to appear, arXiv:1510.00982.

\bibitem{LZ2015}
J. Y. Liu and T. Zhan, \emph{New Developments in the Additive Theory of Prime Numbers}, World Scientific,
Singapore, to appear.

\bibitem{Pre2015}
S. M. Prendiville, \emph{Four variants of the Fourier-analytic transference principle}, arXiv:1509.09200.

\bibitem{Rot1953}
K. F. Roth, \emph{On certain sets of integers}, J. London Math. Soc. \textbf{28} (1953), 104--109. 

\bibitem{Smi2009}
M. L. Smith, \emph{On solution-free sets for simultaneous quadratic and linear equations}, J. Lond. Math. Soc. (2) \textbf{79} (2009), 273--293.

\bibitem{Tao2004}
T. Tao, \emph{Some recent progress on the restriction conjecture}, Fourier analysis and convexity, Birkh\"auser
Boston, Boston, MA, 2004, pp. 217--243.

\bibitem{Tha1987}
K. Thanigasalam, \emph{Improvement on Davenport's iterative method and new results in additive number theory, III}, Acta Arith. \textbf{48} (1987), 97--116. 

\bibitem{Tha1989}
K. Thanigasalam, \emph{On sums of positive integral powers and simple proof of $G(6) \le 31$}, Bull. Calcutta Math. Soc. \textbf{81} (1989), 279--294.

\bibitem{Vau1986}
R. C. Vaughan, \emph{On Waring's problem for cubes}, J. Reine Angew. Math. \textbf{365} (1986), 122--170. 

\bibitem{Vau1997}
R. C. Vaughan, \emph{The Hardy--Littlewood method}, 2nd edition, Cambridge University Press, Cambridge, 1997.

\bibitem{VW2002}
R. C. Vaughan and T. D. Wooley, \emph{Waring's problem: a survey}, Number theory for the millennium III, 301--340, A. K. Peters, Natick, MA, 2002.

\bibitem{Vin1937}
I. M. Vinogradov, \emph{Representation of an odd number as the sum of three primes}, Dokl. Akad. Nauk SSSR \textbf{15} (1937), 291--294 (in Russian).

\bibitem{War1770}
E. Waring, \emph{Meditationes Arithmetic\ae}, 2nd edition, Archdeacon, Cambridge, 1770.

\bibitem{Woo2012}
T. D. Wooley, \emph{The asymptotic formula in Waring's problem}, Int. Math. Res. Not. \textbf{2012,} 1485--1504. 

\bibitem{Woo2015}
T. D. Wooley, \emph{Multigrade efficient congruencing and Vinogradov's mean value theorem}, Proc. Lond. Math. Soc. (3) \textbf{111} (2015), 519--560.

\bibitem{Woo2016}
T. D. Wooley, \emph{On Waring's problem for intermediate powers}, arXiv:1602.03221.

\end{thebibliography}
\providecommand{\bysame}{\leavevmode\hbox to3em{\hrulefill}\thinspace}

\end{document}